\newcommand{\f}[1]{\boldsymbol{#1}}
\def\bfm#1{\boldsymbol{#1}}
\newcommand{\N}{\mathbb N}
\newcommand{\R}{\mathbb R}
\newtheorem{lem}{Lemma}
\theoremstyle{definition}
\newtheorem{ex}{Example}
\newtheorem{rem}{Remark}
\definecolor{gold}{rgb}{1,0.7,0}
\definecolor{dred}{rgb}{0.92,0,0}
\definecolor{dgreen}{rgb}{0,0.6,0}
\def\new{\color{black}}
\definecolor{myred}{rgb}{1,0.2,0.2}
\definecolor{mygreen}{rgb}{0,0.5,0}
\definecolor{myblue}{rgb}{0.2,0.2,1}
\definecolor{mypurple}{rgb}{1,0.3,1}
\definecolor{myviolet}{rgb}{0.3,0,0.7}
\begin{document}

\begin{frontmatter}

\title{A locally based construction of analysis-suitable $G^1$ multi-patch spline surfaces}

\author[Ricam]{Andrea Farahat}
\ead{andrea.farahat@ricam.oeaw.ac.at}

\author[vil]{Mario Kapl}
\ead{m.kapl@fh-kaernten.at}

\author[slo1]{Alja\v z Kosma\v c}
\ead{aljaz.kosmac@iam.upr.si}

\author[slo1]{Vito Vitrih\corref{cor}}
\ead{vito.vitrih@upr.si}
 
\address[Ricam]{Flightkeys GmbH, Vienna, Austria; RICAM, Linz, Austria} 
 
\address[vil]{ADMiRE Research Center, Carinthia University of Applied Sciences, Villach, Austria}

\address[slo1]{IAM and FAMNIT, University of Primorska, Koper, Slovenia}

\cortext[cor]{Corresponding author}

\begin{abstract}
 Analysis-suitable~$G^1$~(AS-$G^1$) multi-patch spline surfaces~\cite{CoSaTa16} are particular $G^1$-smooth multi-patch spline surfaces, which are needed to ensure the construction of $C^1$-smooth multi-patch spline spaces with optimal polynomial reproduction properties~\cite{KaSaTa17b}. We present a novel local approach for the design of AS-$G^1$ multi-patch spline surfaces, which is based on the use of Lagrange multipliers. The presented method is simple and generates an AS-$G^1$ multi-patch spline surface by approximating a given $G^1$-smooth but non-AS-$G^1$ multi-patch surface. Several numerical examples demonstrate the potential of the proposed technique for the construction of AS-$G^1$ multi-patch spline surfaces and show that these surfaces are especially suited for applications in isogeometric analysis by solving the biharmonic problem, a particular fourth order partial differential equation, {\new with optimal rates of convergence} over them.
\end{abstract}

\begin{keyword}
Multi-patch surface \sep geometric continuity \sep analysis-suitable $G^1$ \sep isogeometric analysis \sep biharmonic problem
\end{keyword}

\end{frontmatter}

\section{Introduction}

Multi-patch spline surfaces with possibly extraordinary vertices, i.e.~vertices where less or more than four patches meet, are widely used in computer-aided design to represent complex surface domains~\cite{Fa97,HoLa93}. To employ these surfaces in the framework of isogeometric analysis~\cite{HuCoBa04} for solving fourth order partial differential equations such as the biharmonic problem, e.g.~\cite{BaDe15,FaJuKaTa22,NgKaPe15}, the Kirchhoff-Love shell problem, e.g.~\cite{FaVeKiKa23,kiendl-bletzinger-linhard-09,kiendl-bazilevs-hsu-wuechner-bletzinger-10}, the Cahn-Hilliard equation, e.g.~\cite{GoCaHu09,gomez2008isogeometric,KaMeBo16}, or problems of strain gradient elasticity, e.g.~\cite{gradientElast2011,MaReBeJu18,KhakaloNiiranenC1}, over them, the multi-patch spline surfaces have to be $G^1$-smooth~\cite{Pe02}, and preferably allow the construction of $C^1$-smooth isogeometric spline spaces {\new \cite{Pe15}} with optimal polynomial reproduction properties~\cite{CoSaTa16}. The resulting $C^1$-smooth isogeometric spline functions can then be used to solve the fourth order partial differential equations directly via their weak form and a standard Galerkin discretization, in general, with convergence rates of optimal order. 

As already mentioned, the employed multi-patch spline surface with possibly extraordinary vertices has to be $G^1$-smooth, which means that the surface possesses a well-defined tangent plane at all surface points~\cite{Pe02}, but which further means that the individual surface patches have to be regularly parameterized. The $G^1$-smoothness condition is a weaker condition than the $C^1$-smoothness condition, which even enforces that the two partial derivatives are well-defined in each point. A multi-patch spline surface with regular surface patches possessing extraordinary vertices cannot be $C^1$-smooth, and can become $C^1$-smooth just by adding singularities at the extraordinary vertices~\cite{Pe91}. The use of specific degenerate patches, e.g. D-patches~\cite{Re97}, for modeling such a surface allow the design of $C^1$-smooth isogeometric spline spaces with good approximation properties, see e.g. \cite{NgPe16,ToSpHu17}. However, one drawback is that the resulting spline spaces have to be considered with special care in the neighborhood of a singularity especially with respect to the numerical integration. Strongly related to the singular surface approach is the use of subdivision surfaces, see {\new e.g.~\cite{BHU10b,CiOrSch00,Peters2,RiAuFe16,ZhSaCi18}}, where again in the vicinity of an extraordinary vertex the numerical integration is challenging, and furthermore, the approximation properties are in general reduced.

There are two common techniques to model $G^1$-smooth multi-patch spline surfaces with extraordinary vertices {\new which are especially suited for the numerical simulation}.  
The first approach generates a multi-patch spline surface which is $C^1$-smooth in the regular regions of the surface and $G^1$-smooth in the irregular regions, i.e. in the neighborhood of extraordinary vertices, like a G-spline surface~\cite{Re95}. {\new Further possible examples are \cite{Pe15-2,NgKaPe15,KaPe17,KaPe18}.} 
The second approach constructs a multi-patch spline surface which is in general just $G^1$-smooth everywhere, see e.g.~{\new \cite{BlMoXu2020,FaJuKaTa22,MaMaMo2024,PaZoToGuCh23}}. 
$G^1$-smooth multi-patch spline surfaces which ensure now the construction of $C^1$-smooth isogeometric spline spaces with optimal polynomial reproduction properties are called analysis-suitable $G^1$, in short AS-$G^1$, and have been firstly introduced in~\cite{CoSaTa16}. The AS-$G^1$ condition is a more restrictive condition than the $G^1$ condition, and is fully determined by the parameterizations of the single surface patches. 
{\new The} class of {\new AS-$G^1$} multi-patch spline surfaces is of interest and importance, since the use of non-AS-$G^1$-smooth multi-patch spline surfaces can lead to $C^1$-smooth isogeometric spline spaces with dramatically reduced approximation properties~\cite{KaSaTa17b}. {\new More precisely, it was shown in \cite{KaSaTa17b} on the basis of several examples by performing $L^2$ approximation that the convergence for AS-$G^1$ multi-patch spline geometries is optimal with respect to the $L^2$ and $L^{\infty}$-norm, i.e. of order~$\mathcal{O}(h^{4})$ for bicubic $C^1$ splines, while the convergence for general non-AS-$G^1$ multi-patch spline geometries is dramatically reduced to an order~$\mathcal{O}(h^{\frac{1}{2}})$ for the $L^2$-norm and even to no convergence for the $L^{\infty}$-norm.} 

$G^1$-smooth multi-patch spline surfaces constructed by the first method are usually not AS-$G^1$ and lead to $C^1$-smooth isogeometric spline spaces with reduced approximation properties as shown {\new e.g.}~for the case of $G$-spline surfaces in \cite{WeFaLiWeCa23}. To get isogeometric spline spaces with optimal 
{\new polynomial reproduction properties}, it is necessary to partially increase the degree of the surface and of the isogeometric spline functions in the vicinity of the extraordinary vertices, see e.g.~\cite{Pe15-2,NgKaPe15,KaPe17,KaPe18}. The second approach in contrast allows the modeling of AS-$G^1$ multi-patch spline surfaces. Moreover, the resulting AS-$G^1$ multi-patch spline surfaces and their associated $C^1$-smooth isogeometric spline spaces still have the same degree everywhere, see e.g.~\cite{FaJuKaTa22,KaSaTa17b}.

AS-$G^1$ multi-patch spline geometries have been firstly studied for the case of planar (mapped) piecewise bilinear multi-patch domains~\cite{BeMa14,CoSaTa16,KaBuBeJu16,KaViJu15,mourrain2015geometrically}. Beyond (mapped) bilinear parameterizations, the construction of AS-$G^1$ multi-patch spline geometries is so far limited to the techniques~\cite{FaJuKaTa22,KaSaTa17b}. The method~\cite{KaSaTa17b} generates from a given planar non-AS-$G^1$ multi-patch parameterization an AS-$G^1$ multi-patch spline geometry by interpolating the boundary and by reparameterizing the interior of the domain. The proposed technique relies on the solving of a quadratic optimization problem with linear side constraints, and has been generalized also to a first simple surface example. Two drawbacks of the method are that the algorithm is fully global and that for the solving of the problem the use of symbolic computation is needed, which make the technique just applicable to multi-patch spline geometries with a relatively small number of patches. In~\cite{FaJuKaTa22}, a general framework for the construction of AS-$G^1$ multi-patch spline surface from given $G^1$-smooth but non-AS-$G^1$ multi-patch surface by means of $L^2$ projection is presented. The approach is based on the design of an AS-$G^1$ multi-patch template mapping and on the use of an associated $C^1$-smooth isogeometric spline space for approximating the given non-AS-$G^1$ multi-patch surface. However, the technique is again fully global, and in addition, the construction of the AS-$G^1$ multi-patch template mapping is so far available only for specific classes of multi-patch surfaces such as planarizable surfaces or surfaces topologically equivalent to the sphere. Having an AS-$G^1$ multi-patch spline surface, the usage of the $C^1$-smooth isogeometric spline space~\cite{KaSaTa19a} for the planar case and its extension~\cite{FaJuKaTa22} to the surface case allow to solve fourth order partial differential equations such as the biharmonic problem with optimal convergence rates over the surface. 

In this work, we introduce now a local method which approximates a given $G^1$-smooth but non-AS-$G^1$-smooth multi-patch surface by an AS-$G^1$ multi-patch spline surface. The developed local approach solves first for each inner vertex and boundary vertex of patch valency greater than one, then for each interface and finally for each surface patch of the AS-$G^1$ multi-patch spline surface a small quadratic optimization problem to determine the control points of the surface. Thereby, each minimization problem is solved numerically, which can be done in parallel in the three steps, i.e. for all vertices, for all interfaces and for all surface patches, and further involves Lagrange multipliers to enforce the AS-$G^1$ condition between two neighboring surface patches. Our proposed method is applicable to any $G^1$-smooth multi-patch surface, and can deal due to its local behaviour with multi-patch surfaces possessing a high number of patches. To demonstrate the potential of our method, several examples of AS-$G^1$ multi-patch spline geometries are constructed representing planar as well as surface domains. In addition, the $C^1$-smooth isogeometric spline space~\cite{FaJuKaTa22} is used to solve the biharmonic problem over some of the generated AS-$G^1$ multi-patch spline surfaces, where the numerical results show as required convergence rates of optimal order.     

The remainder of the paper is organized as follows. Section~\ref{sec:preliminaries} presents preliminaries including the description of the used multi-patch surface structure, the recalling of the class of AS-$G^1$ multi-patch surfaces as well as the introduction of the concept of $C^1$-smooth isogeometric spline functions over AS-$G^1$ multi-patch spline surfaces. In Section~\ref{sec:design_ASG1_surfaces}, we first specify the statement of the problem and describe necessary precomputations. Then, we present the locally based technique for the construction of AS-$G^1$ multi-patch spline surfaces. Section~\ref{sec:numerical_examples} demonstrates the potential of the novel method on the basis of several examples by generating AS-$G^1$ multi-patch spline surfaces from given $G^1$-smooth but non-AS-$G^1$ multi-patch surfaces, and by solving the biharmonic problem over some of them. Finally, we conclude the paper in Section~\ref{sec:conclusion}.  

\section{Preliminaries} \label{sec:preliminaries}

We will first present the used multi-patch surface setting, will then introduce the concept of AS-$G^1$ multi-patch spline surfaces which form a particular class of $G^1$-smooth multi-patch spline surfaces, and will finally recall the idea of $C^1$-smooth isogeometric spline spaces over the class of AS-$G^1$ multi-patch spline surfaces.   

\subsection{Multi-patch surface structure} \label{subsec:multi-patch_setting}

Let $p \geq 1$, $0 \leq r \leq p$ and $k \geq 0$. We denote by $\mathcal{S}^{p,r}_{k}$ the univariate spline space of degree~$p$ and regularity~$r$ with $k$ uniform inner knots of multiplicity~$p-r$ over the unit interval~$[0,1]$, and by $\mathcal{S}^{\f{p},\f{r}}_{\f{k}}$ with $\f{p}=(p,p)$, $\f{r}=(r,r)$ and $\f{k}=(k,k)$, the associated tensor-product spline space~$\mathcal{S}_{k}^{p,r} \times \mathcal{S}_{k}^{p,r}$ over the unit square~$[0,1]^2$. Let $N_{j}^{p,r}$, $j \in \mathcal{J} = \{0,1,\ldots,n-1 \}$, with $n=\dim \mathcal{S}_{k}^{p,r} = p+1+k(p-r)$, be the B-splines of the spline space~$\mathcal{S}_{k}^{p,r}$, and let $N_{j_1,j_2}^{\f{p},\f{r}} = N_{j_1}^{p,r} N_{j_2}^{p,r}$, $j_1,j_2 \in \mathcal{J}$, be the tensor-product B-splines of the spline space~$\mathcal{S}_{\f{k}}^{\f{p},\f{r}}$. In addition, let us denote by $\zeta_{j}^{p,r}$, $j \in \mathcal{J}$, the Greville abscissae of the spline space $\mathcal{S}^{p,r}_{k}$. We further need the spline spaces~$\mathcal{S}^{p,r+1}_{k}$ and $\mathcal{S}^{p-1,r}_{k}$ with the corresponding B-splines $N_{j}^{p,r+1}$, $j \in \mathcal{J}_0=\{0,1,\ldots,n_0-1 \}$, and $N_{j}^{p,r-1}$, $j \in \mathcal{J}_1=\{0,1,\ldots,n_1-1 \}$, respectively, with $n_0=\dim \mathcal{S}^{p,r+1}_k = p+1+k(p-r-1)$ and $n_1=\dim \mathcal{S}^{p-1,r}_k = p+k(p-r-1)$.

We consider a $G^1$-smooth conforming multi-patch spline surface~$\f{F}$, consisting of regular quadrilateral surface patch parameterizations $\f{F}^{(i)} \in (\mathcal{S}_{\f{k}}^{\f{p},\f{r}})^3$, $i \in \mathcal{I}_{\Omega}$, with B-spline representations
\begin{equation} \label{eq:Bspline_representation}
 \f{F}^{(i)}(\f{\xi})=\f{F}^{(i)}(\xi_1,\xi_2) = \sum_{j_1 \in \mathcal{J}} \sum_{j_2 \in \mathcal{J}} \f{d}_{j_1,j_2}^{(i)} N_{j_1,j_2}^{\f{p},\f{r}}(\xi_1,\xi_2), \quad \f{d}_{j_1,j_2}^{(i)} \in \R^3,
\end{equation}
which defines a surface domain~$\Omega$ via $\Omega= \cup_{i \in \mathcal{I}_{\Omega}} \overline{\Omega^{(i)}}$ with $\overline{\Omega^{(i)}} = \f{F}^{(i)}([0,1]^2)$, {\new cf.~Fig.~\ref{Domain01}.} 
\begin{figure}[htb]
	\centering
		\includegraphics[scale=0.27]{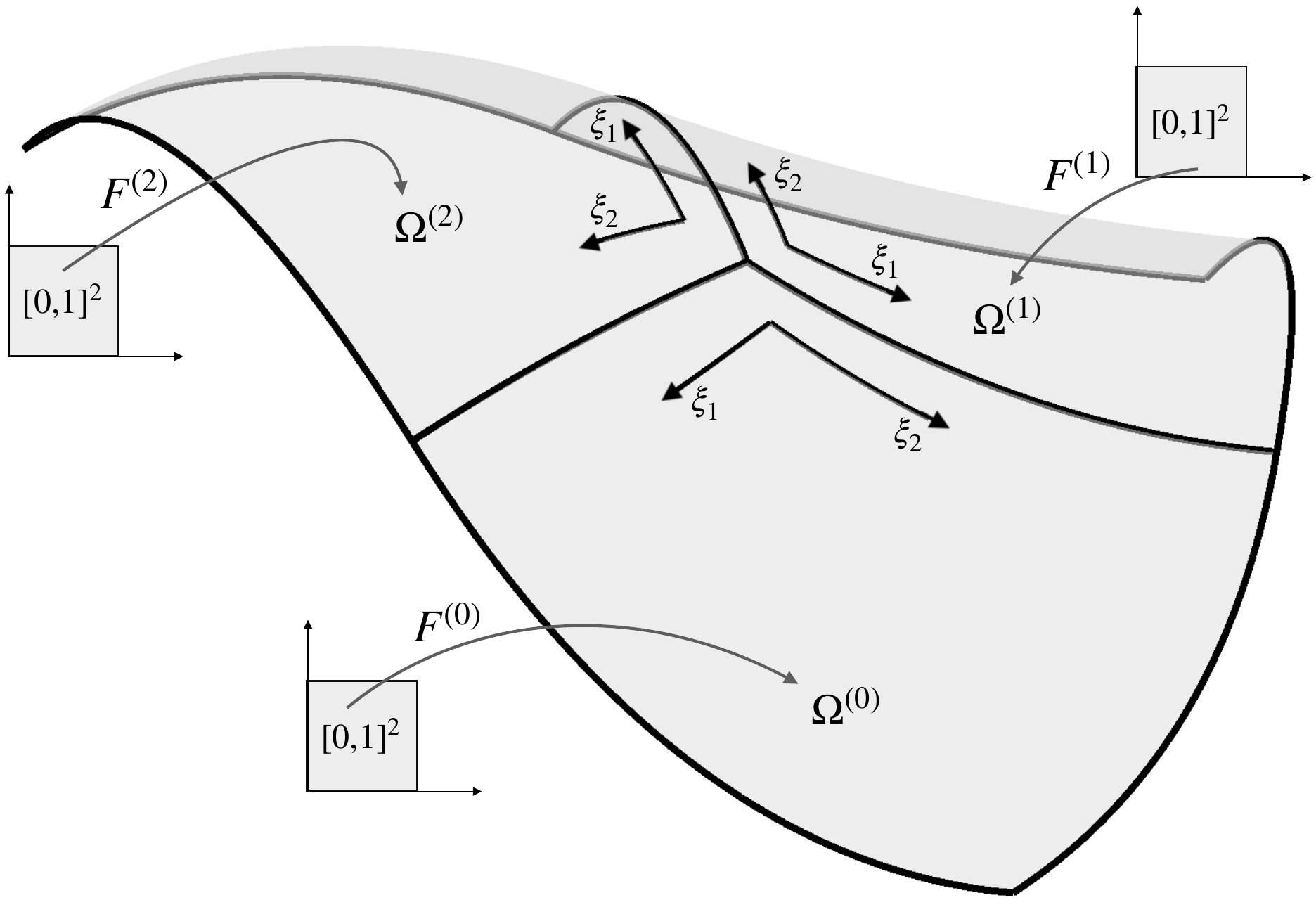}	
  \caption{{\new A multi-patch surface domain $\Omega$ consisting of three surface patches $\Omega^{(i)}$ with the corresponding geometry mappings $\f{F}^{(i)}$, $i=1,2,3$.}
  }
   \label{Domain01}
\end{figure}
The surface domain~$\Omega$ can be further decomposed into the disjoint union
\begin{equation} \label{eq:decomposition}
 \Omega = \left( \bigcup_{i \in \mathcal{I}_{\Omega}} \Omega^{(i)} \right) \;  \dot{\cup} \left( \bigcup_{i \in \mathcal{I}_{\Sigma}} \Sigma^{(i)} \right) \dot{\cup} \left( \bigcup_{i \in \mathcal{I}_{\chi}} \f{x}^{(i)}\right) 
\end{equation}
with the open quadrilateral surface patches~$\Omega^{(i)}$, $i \in \mathcal{I}_{\Omega}$, with open interface and boundary curves~$\Sigma^{(i)}$, $i \in \mathcal{I}_{\Sigma}$, given by the boundary curves of the surface patch parameterizations~$\f{F}^{(j)}$, and with inner and boundary vertices~$\f{x}^{(i)}$, $i \in \mathcal{I}_{\chi}$, given by the corner points of the surface patch parameterizations~$\f{F}^{(j)}$, cf. Fig.~\ref{Domain0}. To distinguish the case of an interface and boundary curve~$\Sigma^{(i)}$, $i \in \mathcal{I}_{\Sigma}$, as well as of an inner and boundary vertex~$\f{x}^{(i)}$, $i \in \mathcal{I}_{\chi}$, we decompose the index sets $\mathcal{I}_{\Sigma}$ and $\mathcal{I}_{\chi}$ into $\mathcal{I}_{\Sigma}^{\circ}$ and $\mathcal{I}_{\Sigma}^{\Gamma}$, and into $\mathcal{I}_{\chi}^{\circ}$ and $\mathcal{I}_{\chi}^{\Gamma}$, respectively. In addition, we denote the patch valency of a vertex~$\f{x}^{(i)}$, $i \in \mathcal{I}_{\chi}$, by $\nu_{i}$. 

\begin{figure}[htb]
	\centering
		\includegraphics[scale=0.35]{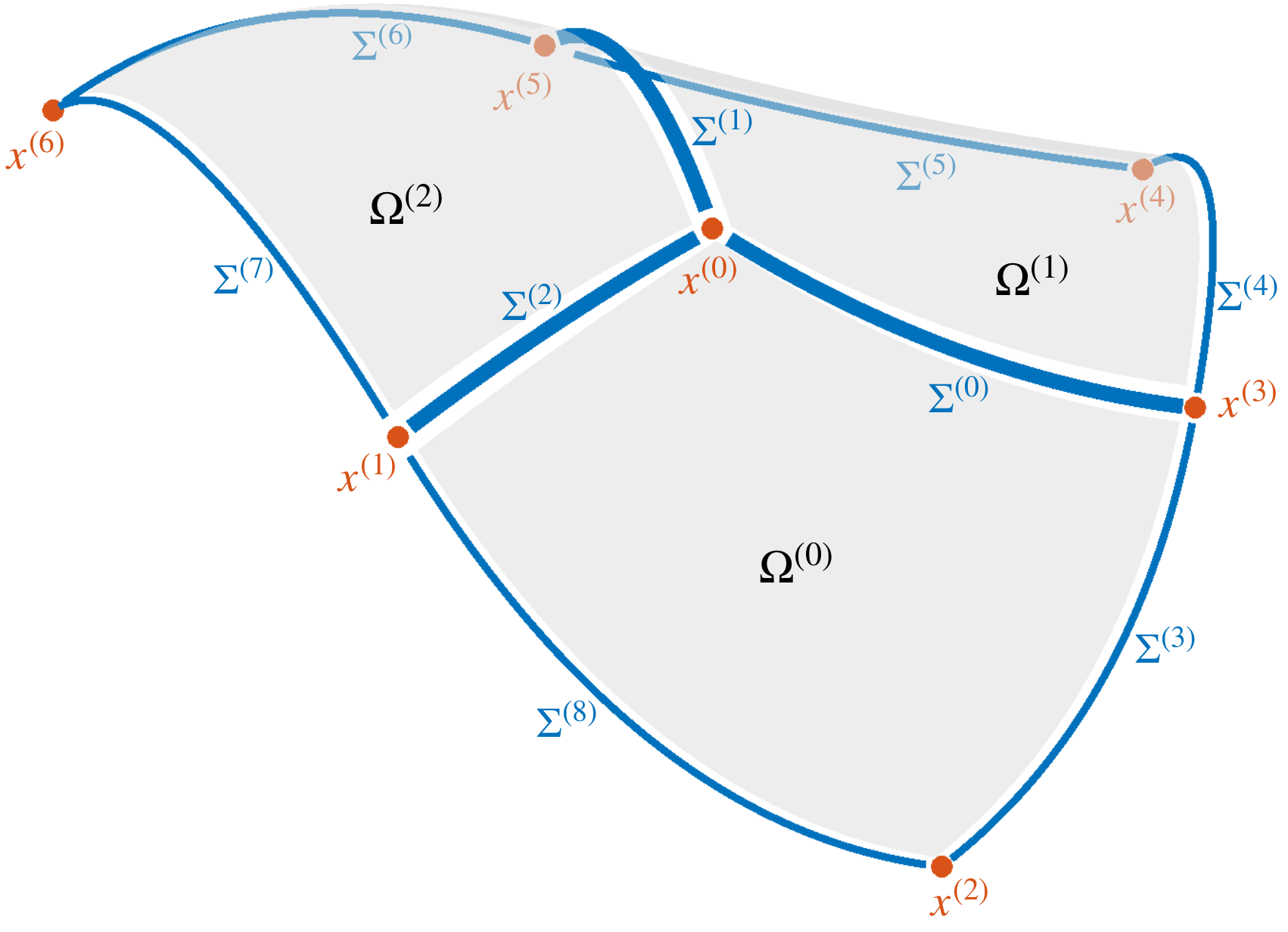}	
  \caption{{\new Decomposition~\eqref{eq:decomposition} of the multi-patch surface domain $\Omega$ into the patches $\Omega^{(i)}$ (gray), curves $\Sigma^{(i)}$ (blue) and vertices $\f{x}^{(i)}$ (red).}}
   \label{Domain0}
\end{figure}

To simplify the notation of the paper, we will assume for each interface curve~$\Sigma^{(i)}$, $i \in \mathcal{I}_{\Sigma}^{\circ}$, with $\Sigma^{(i)} \subset \overline{\Omega^{(i_1)}} \cap \overline{\Omega^{(i_2)}}$, that the two associated surface patch parameterizations are (re)parameterized in such a way that the interface curve~$\Sigma^{(i)}$ is given by
\[
 \f{F}^{(i_1)}(0,\xi) = \f{F}^{(i_2)}(\xi,0), \quad \xi \in (0,1),
\]
see Fig.~\ref{Domain01}, and will assume for each boundary curve~$\Sigma^{(i)}$, $i \in \mathcal{I}_{\Sigma}^{\Gamma}$, with $\Sigma^{(i)} \subset \overline{\Omega^{(i_1)}}$ that the corresponding surface patch parameterization is (re)parameterized such that
\[
 \Sigma^{(i)} = \{\f{F}^{(i_1)}(0,\xi) \, : \, \xi \in (0,1)  \}.
\]
Moreover, we will assume for each inner vertex~$\f{x}^{(i)}$, $i \in \mathcal{I}_{\chi}^{\circ}$, with patch valency~$\nu_{i}$, that the surface patches and interface curves around the vertex~$\f{x}^{(i)}$ are labeled in counterclockwise order as~$\Omega^{(i_1)}$, $\Sigma^{(i_1)}$, $\Omega^{(i_2)}$, $\ldots$, $\Omega^{(i_{\nu_i})}$, $\Sigma^{(i_{\nu_i})}$, and that the associated surface patch parameterizations~$\f{F}^{(i_{\ell})}$, $\ell=1,\ldots,\nu_i$, are (re)parameterized in such a way that the interface curves~$\Sigma^{(i_{\ell})}$, $\ell=1,\ldots,\nu_i-1
$, are given as
\begin{equation} \label{eq:interface_representation}
 \f{F}^{(i_{\ell})}(0,\xi) = \f{F}^{(i_{\ell+1})}(\xi,0), \quad \xi \in (0,1),
\end{equation}
and that the remaining interface curve~$\Sigma^{(i_{\nu_i})}$ is given as
\[
 \f{F}^{(i_{\nu_i})}(0,\xi) = \f{F}^{(i_{1})}(\xi,0), \quad \xi \in (0,1),
\]
which implies that
\begin{equation} \label{eq:vertex_representation}
 \f{x}^{(i)}=\f{F}^{(i_1)}(0,0) = \cdots = \f{F}^{(i_{\nu_i})}(0,0),
\end{equation}
cf. Fig.~\ref{Domain01}. Similarly, we will assume for each boundary vertex~$\f{x}^{(i)}$, $i \in \mathcal{I}_{\chi}^{\Gamma}$, with patch valency~$\nu_{i}$, that the surface patches and interface/boundary curves around the vertex~$\f{x}^{(i)}$ are labeled in counterclockwise order as~$\Sigma^{(i_0)}$, $\Omega^{(i_1)}$, $\Sigma^{(i_1)}$, $\Omega^{(i_2)}$, $\ldots$, $\Omega^{(i_{\nu_i})}$, $\Sigma^{(i_{\nu_i})}$ with the boundary curves~$\Sigma^{(i_0)}$ and $\Sigma^{(i_{\nu_i})}$, and that the associated surface patch parameterizations~$\f{F}^{(i_{\ell})}$, $\ell=1,\ldots,\nu_i$, are (re)parameterized in such a way that the interface curves~$\Sigma^{(i_{\ell})}$, $\ell=1,\ldots,\nu_i-1$, are again given as~\eqref{eq:interface_representation},
and which again implies~\eqref{eq:vertex_representation}.

\subsection{AS-$G^1$ multi-patch spline surfaces} \label{subsec:ASG1_surfaces}

We recall that a multi-patch spline surface~$\f{F}$ is $G^1$-smooth if and only if for each interface curve~$\Sigma^{(i)}$, $i \in \mathcal{I}^{\circ}_{\Sigma}$, with $\Sigma^{(i)} \subset \overline{\Omega^{(i_1)}} \cap \overline{\Omega^{(i_2)}}$, there exist functions $\alpha^{(i,i_1)}:[0,1] \rightarrow \R$, $\alpha^{(i,i_2)}:[0,1]\rightarrow \R$ and $\beta^{(i)}: [0,1] \rightarrow \R$ such that for the associated surface patch parameterizations~$\f{F}^{(i_1)}$ and $\f{F}^{(i_2)}$ and for all $\xi \in [0,1]$ we have
\begin{equation} \label{eq:alphas}
 \alpha^{(i,i_1)}(\xi) \, \alpha^{(i,i_2)} (\xi) >0,
\end{equation}
\begin{equation}  \label{eq:C0continuity}
 \f{f}_0^{(i,i_1)}(\xi) = \f{f}_0^{(i,i_2)}(\xi)
\end{equation}
and
\[
 \alpha^{(i,i_1)}(\xi)\partial_2 \f{F}^{(i_2)}(\xi,0) + \alpha^{(i,i_2)}(\xi) \partial_1 \f{F}^{(i_1)}(0,\xi) + \beta^{(i)}(\xi) \partial_2 \f{F}^{(i_1)}(0,\xi) = \f{0},
\]
with 
\begin{equation*}
\f{f}_0^{(i,i_1)}(\xi)=\f{F}^{(i_1)}(0,\xi) \mbox{ and }\f{f}_0^{(i,i_2)}(\xi)=\f{F}^{(i_2)}(\xi,0),
\end{equation*}
cf.~\cite{HoLa93,Pe02}. It is worth to note that the functions $\alpha^{(i,i_1)}$, $\alpha^{(i,i_2)}$ and $\beta^{(i)}$ are uniquely determined up to a common function~$\gamma^{(i)}: [0,1] \rightarrow \R$. 

A multi-patch spline surface $\f{F}$ is called analysis-suitable $G^1$ (AS-$G^1$) if the surface~$\f{F}$ is $G^1$-smooth, {\new and} if {\new further} for each interface curve~$\Sigma^{(i)}$, $i \in \mathcal{I}^{\circ}_{\Sigma}$, with $\Sigma^{(i)} \subset \overline{\Omega^{(i_1)}} \cap \overline{\Omega^{(i_2)}}$, the functions $\alpha^{(i,i_1)}$ and $\alpha^{(i,i_2)}$ can be {\new selected as} linear polynomials, and 
there exist linear polynomials $\beta^{(i,i_1)}:[0,1] \rightarrow \R$ and $\beta^{(i,i_2)}:[0,1] \rightarrow \R$ such that 
\begin{equation} \label{eq:beta}
 \beta^{(i)}(\xi) = \alpha^{(i,i_1)}(\xi) \,\beta^{(i,i_2)}(\xi) + \alpha^{(i,i_2)}(\xi) \,\beta^{(i,i_1)}(\xi), \quad \xi \in [0,1],
\end{equation}
cf.~\cite{CoSaTa16}. Note that the functions $\beta^{(i,i_1)}$ and $\beta^{(i,i_2)}$ are not uniquely determined. Using Eqs.~\eqref{eq:alphas}, \eqref{eq:C0continuity} and \eqref{eq:beta}, we can reformulate the AS-$G^1$ condition as follows: A multi-patch spline surface~$\f{F}$ is AS-$G^1$ if for each interface curve~$\Sigma^{(i)}$, $i \in \mathcal{I}_{\Sigma}^{\circ}$, with $\Sigma^{(i)} \subset \overline{\Omega^{(i_1)}} \cap \overline{\Omega^{(i_2)}}$, there exist linear polynomials $\alpha^{(i,i_{\ell})}:[0,1] \rightarrow \R$ and $\beta^{(i,i_{\ell})}:[0,1] \rightarrow \R$, $\ell =1,2$, such that Eqs.~\eqref{eq:alphas} and \eqref{eq:C0continuity} are satisfied, and we further have for all $\xi \in [0,1]$
\begin{equation} \label{eq:AS_G1}
 \f{f}_{1}^{(i,i_1)}(\xi) =  \f{f}_{1}^{(i,i_2)}(\xi)
\end{equation}
with
\begin{equation} \label{eq:f1_1}
 \f{f}_{1}^{(i,i_1)}(\xi)= \frac{\partial_1 \f{F}^{(i_1)}(0,\xi)+\beta^{(i,i_1)}(\xi)\partial_2 \f{F}^{(i_1)}(0,\xi)}{\alpha^{(i,i_1)}(\xi)} 
\end{equation}
and
\begin{equation} \label{eq:f1_2}
 \f{f}_1^{(i,i_2)}(\xi)= - \frac{\partial_2 \f{F}^{(i_2)}(\xi,0)+\beta^{(i,i_2)}\partial_1 \f{F}^{(i_2)}(\xi,0)}{\alpha^{(i,i_2)}(\xi)}, 
\end{equation}
where $\f{F}^{(i_1)}$ and $\f{F}^{(i_2)}$ are the associated surface patch parameterizations of $\Omega^{(i_1)}$ and $\Omega^{(i_2)}$, respectively. We denote the equally valued terms~\eqref{eq:C0continuity} and $\eqref{eq:AS_G1}$ by $\f{f}_0^{(i)}$ and $\f{f}_{1}^{(i)}$, respectively, where $\f{f}_0^{(i)}$ just represents the interface curve~$\Sigma^{(i)}$ and $\f{f}_{1}^{(i)}$ describes a specific transversal derivative of the multi-patch spline surface~$\f{F}$ at the interface curve~$\Sigma^{(i)}$. We further denote the functions $\alpha^{(i,i_{\ell})}$ and $\beta^{(i,i_{\ell})}$, $\ell=1,2$, as gluing functions.

\subsection{Concept of $C^1$-smooth isogeometric spline spaces} \label{subsec:C1_spaces}

Let $\f{F}$ be an AS-$G^1$ multi-patch spline surface with its surface patch parameterizations~$\f{F}^{(i)}$, $i \in \mathcal{I}_{\Omega}$. The space of $C^1$-smooth isogeometric spline functions over $\f{F}$ is given as
\[
 \mathcal{V}^1= \{ \phi \in C^1(\Omega) \, : \, \phi \, \circ \f{F}^{(i)} \in \mathcal{S}^{\f{p},\f{r}}_{\f{k}}, i \in \mathcal{I}_{\Omega} \}.
\]
The space~$\mathcal{V}^1$ can be characterized as follows: A function~$\phi \in L^{2}(\Omega)$ belongs to the space~$\mathcal{V}^1$ if and only if for each interface curve~$\Sigma^{(i)}$, $i \in \mathcal{I}_{\Sigma}^{\circ}$, with $\Sigma^{(i)} \subset \overline{\Omega^{(i_1)}} \cap \overline{\Omega^{(i_2)}}$, the function~$\phi$ fulfills for all $\xi \in [0,1]$
\begin{equation} \label{eq:trace}
 \left(\phi \, \circ \f{F}^{(i_1)} \right)(0,\xi) = \left( \phi \, \circ \f{F}^{(i_2)} \right)(\xi,0) 
\end{equation}
and 
\begin{eqnarray}
 \frac{\partial_1 \left(\phi \, \circ \f{F}^{(i_1)} \right)(0,\xi) + \beta^{(i,i_1)}(\xi)\partial_2\left(\phi \, \circ \f{F}^{(i_1)} \right)(0,\xi)}{\alpha^{(i,i_1)}(\xi)} = \nonumber \\ 
 \label{eq:derivative} \\[-0.2cm]
  - \frac{\partial_2 \left(\phi \, \circ \f{F}^{(i_2)} \right)(\xi,0) + \beta^{(i,i_2)}(\xi)\partial_1\left(\phi \, \circ \f{F}^{(i_2)} \right)(\xi,0)}{\alpha^{(i,i_2)}(\xi)}, \nonumber
\end{eqnarray}
where $\f{F}^{(i_1)}$ and $\f{F}^{(i_2)}$ are the associated surface patch parameterizations of $\Omega^{(i_1)}$ and $\Omega^{(i_2)}$, respectively, cf.~\cite{FaJuKaTa22}. The equally valued terms~\eqref{eq:trace} and \eqref{eq:derivative} represent the trace of the function~$\phi$ along the interface curve~$\Sigma^{(i)}$ and a specific transversal derivative of the function~$\phi$ across the interface curve~$\Sigma^{(i)}$, respectively, cf.~\cite{KaSaTa17a}, and will be denoted by $g_{0}^{(i)}$ and $g_{1}^{(i)}$, respectively. In case of a boundary curve~$\Sigma^{(i)}$, i.e. $i \in \mathcal{I}_{\Sigma}^{\Gamma}$, we equivalently define $g_0^{(i)}$ and $g_1^{(i)}$ by taking the left sides of~\eqref{eq:trace} and \eqref{eq:derivative}, and by simply selecting $\alpha^{(i,i_1)}=1$ and $\beta^{(i,i_1)}=0$.

The dimension of $\mathcal{V}^{1}$ heavily depends on the surface patch parameterizations~$\f{F}^{(i)}$, $i \in \mathcal{I}_{\Omega}$, of the AS-$G^1$ multi-patch spline surface~$\f{F}$, cf.~\cite{KaSaTa17a}, which makes the space~$\mathcal{V}^1$ difficult to use. A subspace of $\mathcal{V}^1$, whose dimension is independent of the parameterizations~$\f{F}^{(i)}$  and which possesses optimal polynomial reproduction properties like~$\mathcal{V}^1$, was first introduced for the planar case in \cite{KaSaTa19a} and later extended to the surface case in~\cite{FaJuKaTa22}, and will be employed for the numerical simulation in Section~\ref{subsec:biharmonic_equation}. The $C^1$-smooth subspace is defined as 
\begin{equation} \label{eq:definition}
 \mathcal{A} = \{ \phi \in \mathcal{V}^1 \, : \;  g_0^{(i)} \in \mathcal{S}_{k}^{p,r+1}, \, g_1^{(i)} \in \mathcal{S}_{k}^{p-1,r}, \, i \in \mathcal{I}_{\Sigma}, \mbox{ and } \phi \in C^2_T(\f{x}^{(i)}), \, i \in \mathcal{I}_{\chi}\}, 
\end{equation}
where $C^2_T(\f{x}^{(i)})$ stands for $C^2$-smooth at the vertex~$\f{x}^{(i)}$ with respect to the tangent plane at $\f{x}^{(i)}$. For more details about the space~$\mathcal{A}$, such as the construction of a locally supported basis of~$\mathcal{A}$, we refer to~\cite{FaJuKaTa22}.

\section{Design of AS-$G^1$-multi-patch spline surfaces} \label{sec:design_ASG1_surfaces}

We will present a locally based procedure to generate an AS-$G^1$ multi-patch spline surface from a given $G^1$-smooth but non-AS-$G^1$ multi-patch surface. Before, we will outline the statement of the problem and will explain some precomputations.

\subsection{Problem statement} \label{subsec:problem_statement}

The main objective of the paper is as follows: 
\begin{quote} \em
Given a conforming $G^1$-smooth but non-AS-$G^1$ multi-patch surface~$\f{S}$ consisting of regular quadrilateral surface patch parameterizations~$\f{S}^{(i)}:[0,1]^2 \rightarrow \R^3$, $i \in \mathcal{I}_{\Omega}$, find an AS-$G^1$ multi-patch spline surface~$\f{F}$ with regular quadrilateral surface patch parameterizations~$\f{F}^{(i)} \in (\mathcal{S}_{\f{k}}^{\f{p},\f{r}})^3$, $i \in \mathcal{I}_{\Omega}$, for some appropriate $p$, $r$ and $k$ such that $\f{F}$ possesses the same multi-patch structure as $\f{S}$ and such that $\f{F}$ approximates $\f{S}$ as good as possible by minimizing the error $\| \f{F}-\f{S}\|_{H^1,\sigma}$ with respect to the weighted $H^1$-norm 
\begin{equation} \label{eq:weightedH1error}
\| \f{G}\|_{H^1,\sigma}^2= \sum_{i \in \mathcal{I}_{\Omega}} \sum_{\substack{\ell_1,\ell_2 \in \N_0 \\ \ell_1+\ell_2\leq 1}} \sigma^{\ell_1+\ell_2}  \int_{[0,1]^2} \left\| \partial^{\ell_1}_1\partial^{\ell_2}_2\f{G}^{(i)}(\f{\xi})  \right\|^2  d \f{\xi}
\end{equation}
for a multi-patch surface~$\f{G}$ consisting of the single surface patch parameterizations~$\f{G}^{(i)}$, $i \in \mathcal{I}_{\Omega}$,
where $\sigma$ is a uniform scaling factor (weight) given by $\sigma=\frac{1}{p (k+1)}$.
\end{quote}

Note that the given $G^1$-smooth multi-patch surface~$\f{S}$ need not to be a spline surface. It can be any conforming $G^1$-smooth multi-patch surface with the property that the individual surface patch parameterizations~$\f{S}^{(i)}$, $i \in \mathcal{I}_{\Omega}$, are regular and possess a representation $\f{S}^{(i)}:[0,1]^2 \rightarrow \R^3$. 

Analyzing the AS-$G^1$ condition~\eqref{eq:AS_G1}, one can easily observe: Even for prefixed degree~$p$, regularity~$r$ and number of different inner knots~$k$, the fulfillment of the AS-$G^1$ condition~\eqref{eq:AS_G1} for each interface curve~$\Sigma^{(i)}$, $i \in \mathcal{I}_{\Sigma}^{\circ}$, with $\Sigma^{(i)} \subset \overline{\Omega^{(i_1)}} \cap \overline{\Omega^{(i_2)}}$ is highly non-linear, since the linear gluing functions $\alpha^{(i,i_{\ell})}$ and $\beta^{(i,i_{\ell})}$, $\ell=1,2$, possessing the B\'{e}zier representations
\begin{equation} \label{eq:Bezier_alphas_betas}
\alpha^{(i,i_{\ell})}(\xi) = a_{0}^{(i,i_{\ell})} (1-\xi) + a_{1}^{(i,i_{\ell})} \xi
\quad
\mbox{ and }
\quad
\beta^{(i,i_{\ell})}(\xi) = b_{0}^{(i,i_{\ell})} (1-\xi) + b_{1}^{(i,i_{\ell})} \xi
\end{equation}
with the unknown control points $a_j^{(i,i_{\ell})}, b_j^{(i,i_{\ell})} \in \R$, $j=0,1$, as well as the corresponding surface patch parameterizations~$\f{F}^{(i_{\ell})}$, $\ell=1,2$, possessing the B-spline representations~\eqref{eq:Bspline_representation} with the unknown control points~$\f{d}^{(i_{\ell})}_{j_1,j_2} \in \R^3$ have to be determined. To make in our proposed method in Subsection~\ref{subsec:exact_ASG1_surfaces} the fulfillment of the AS-$G^1$ condition~\eqref{eq:AS_G1} linear, we will precompute for each interface curve~$\Sigma^{(i)}$ the linear gluing functions $\alpha^{(i,i_{\ell})}$ and $\beta^{(i,i_{\ell})}$, $\ell=1,2$. This precomputation as well as the preselection of the degree~$p$, regularity~$r$ and number of different inner knots~$k$ of the spline space~$\mathcal{S}_{\f{k}}^{\f{p},\f{r}}$ will be explained in the following subsection.   

\subsection{Precomputation $\&$ selection of data} \label{subsec:precomputation_data}

We will first describe the precomputation of the {\new linear} gluing functions $\alpha^{(i,i_{\ell})}$ and $\beta^{(i,i_{\ell})}$, $\ell=1,2$, for each interface curve~$\Sigma^{(i)}$ with $\Sigma^{(i)} \subset \overline{\Omega^{(i_1)}} \cap \overline{\Omega^{(i_2)}}$, which 
{\new will reflect the configuration of the first derivatives of the surface patch parameterizations~$\f{S}^{(i)}$, $i \in \mathcal{I}_{\Omega}$, of the given $G^1$-smooth multi-patch surface~$\f{S}$ at the corresponding vertices.}
{\new Thereby,} the proposed technique will estimate the gluing functions from the given
surface~$\f{S}$ and will be motivated by the calculation of the gluing functions for the case of bilinearly parameterized planar multi-patch domains, which are AS-$G^1$ via construction. Let $\widehat{\f{S}}$ be a bilinearly parameterized planar multi-patch domain with regular quadrilateral bilinear patches~$\widehat{\f{S}}^{(i)}$, $i \in \mathcal{I}_{\Omega}$, and interfaces~$\widehat{\Sigma}^{(i)}$, $i \in \mathcal{I}_{\Sigma}^\circ$. For each interface~$\widehat{\Sigma}^{(i)}$ with the two corresponding neighboring patches~$\widehat{\f{S}}^{(i_1)}$ and $\widehat{\f{S}}^{(i_2)}$, the linear gluing functions $\widehat{\alpha}^{(i,i_{\ell})}$ and $\widehat{\beta}^{(i,i_{\ell})}$, $\ell=1,2$, with 
\[
\widehat{\alpha}^{(i,i_{\ell})}(\xi) = \widehat{a_{0}}^{(i,i_{\ell})} (1-\xi) + \widehat{a_{1}}^{(i,i_{\ell})} \xi
\quad
\mbox{ and }
\quad
\widehat{\beta}^{(i,i_{\ell})}(\xi) = \widehat{b_{0}}^{(i,i_{\ell})} (1-\xi) + \widehat{b_{1}}^{(i,i_{\ell})} \xi,
\]
can be simply computed via
\[
 \widehat{a_0}^{(i,i_1)} = \det [\partial_1 \widehat{\f{S}}^{(i,i_1)}(0,0)  , \partial_2 \widehat{\f{S}}^{(i,i_1)}(0,0) ], \quad 
  \widehat{a_1}^{(i,i_1)} = \det [\partial_1 \widehat{\f{S}}^{(i,i_1)}(0,1)  , \partial_2 \widehat{\f{S}}^{(i,i_1)}(0,1) ], 
\]
\[
 \widehat{a_0}^{(i,i_2)} = \det [\partial_1 \widehat{\f{S}}^{(i,i_2)}(0,0)  , \partial_2 \widehat{\f{S}}^{(i,i_2)}(0,0) ], \quad 
  \widehat{a_1}^{(i,i_2)} = \det [\partial_1 \widehat{\f{S}}^{(i,i_2)}(1,0)  , \partial_2 \widehat{\f{S}}^{(i,i_2)}(1,0) ], 
\]
\[
 \widehat{b_0}^{(i,i_1)}= \frac{\partial_1 \widehat{\f{S}}^{(i,i_1)}(0,0) \cdot \partial_2 \widehat{\f{S}}^{(i,i_1)}(0,0) }{||\partial_2 \widehat{\f{S}}^{(i,i_1)}(0,0) ||^2} , \quad \widehat{b_1}^{(i,i_1)}= \frac{\partial_1 \widehat{\f{S}}^{(i,i_1)}(0,1) \cdot \partial_2 \widehat{\f{S}}^{(i,i_1)}(0,1) }{||\partial_2 \widehat{\f{S}}^{(i,i_1)}(0,1) ||^2} 
\]
and
\[
 \widehat{b_0}^{(i,i_2)}= \frac{\partial_1 \widehat{\f{S}}^{(i,i_2)}(0,0) \cdot \partial_2 \widehat{\f{S}}^{(i,i_2)}(0,0) }{||\partial_1 \widehat{\f{S}}^{(i,i_2)}(0,0) ||^2} , \quad \widehat{b_1}^{(i,i_2)}= \frac{\partial_1 \widehat{\f{S}}^{(i,i_2)}(1,0) \cdot \partial_2 \widehat{\f{S}}^{(i,i_2)}(1,0) }{||\partial_1 \widehat{\f{S}}^{(i,i_2)}(1,0) ||^2} ,
\]
cf.~\cite{CoSaTa16}. We see that the linear gluing functions $\widehat{\alpha}^{(i,i_{\ell})}$ and $\widehat{\beta}^{(i,i_{\ell})}$, $\ell=1,2$, are fully determined just via first derivatives of the patches $\widehat{\f{S}}^{(i,i_{\ell})}$, $\ell=1,2$, at the corresponding vertices. Furthermore, we have 
directly via construction that $\widehat{\alpha}^{(i,i_1)} \widehat{\alpha}^{(i,i_2)} > 0$.

We will adapt this design idea to construct the linear gluing functions $\alpha^{(i,i_{\ell})}$ and $\beta^{(i,i_{\ell})}$, $\ell=1,2$, for each interface curve~$\Sigma^{(i)}$, $i \in \mathcal{I}_{\Sigma}^{\circ}$, with $\Sigma^{(i)} \subset \overline{\Omega^{(i_1)}} \cap \overline{\Omega^{(i_2)}}$ for the desired AS-$G^1$ multi-patch spline surface. More precisely, we compute the linear gluing functions $\alpha^{(i,i_{\ell})}$ and $\beta^{(i,i_{\ell})}$, $\ell=1,2$, as before for the linear gluing functions $\widehat{\alpha}^{(i,i_{\ell})}$ and $\widehat{\beta}^{(i,i_{\ell})}$, $\ell=1,2$, but now with respect to the tangent planes at the corresponding vertices and by means of the first derivatives of the surface patch parameterizations $\f{S}^{(i_{\ell})}$, $\ell=1,2$, of the given $G^1$-smooth multi-patch surface~$\f{S}$. This leads to
\[
 a_0^{(i,i_1)} = ||\partial_1 \f{S}^{(i,i_1)}(0,0) \times \partial_2 \f{S}^{(i,i_1)}(0,0) ||, \quad 
  a_1^{(i,i_1)} = ||\partial_1 \f{S}^{(i,i_1)}(0,1) \times \partial_2 \f{S}^{(i,i_1)}(0,1) ||, 
\]
\[
 a_0^{(i,i_2)} = || \partial_1 \f{S}^{(i,i_2)}(0,0)  \times \partial_2 \f{S}^{(i,i_2)}(0,0) ||, \quad 
  a_1^{(i,i_2)} = || \partial_1 \f{S}^{(i,i_2)}(1,0)  \times \partial_2 \f{S}^{(i,i_2)}(1,0) ||, 
\]
\[
 b_0^{(i,i_1)}= \frac{\partial_1 \f{S}^{(i,i_1)}(0,0) \cdot \partial_2 \f{S}^{(i,i_1)}(0,0) }{||\partial_2 \f{S}^{(i,i_1)}(0,0) ||^2} , \quad b_1^{(i,i_1)}= \frac{\partial_1 \f{S}^{(i,i_1)}(0,1) \cdot \partial_2 \f{S}^{(i,i_1)}(0,1) }{||\partial_2 \f{S}^{(i,i_1)}(0,1) ||^2} 
\]
and
\[
 b_0^{(i,i_2)}= \frac{\partial_1 \f{S}^{(i,i_2)}(0,0) \cdot \partial_2 \f{S}^{(i,i_2)}(0,0) }{||\partial_1 {\f{S}}^{(i,i_2)}(0,0) ||^2} , \quad b_1^{(i,i_2)}= \frac{\partial_1 \f{S}^{(i,i_2)}(1,0) \cdot \partial_2 \f{S}^{(i,i_2)}(1,0) }{||\partial_1 \f{S}^{(i,i_2)}(1,0) ||^2} ,
\]
where $a_j^{(i,i_{\ell})}, b_j^{(i,i_{\ell})}$ are the control points of the linear gluing functions $\alpha^{(i,i_{\ell})}$ and $\beta^{(i,i_{\ell})}$, $\ell=1,2$, with B\'{e}zier representations~\eqref{eq:Bezier_alphas_betas}. Again, we directly obtain that $\alpha^{(i,i_1)} \alpha^{(i,i_2)} >0$.  
{\new Moreover,}  
the resulting linear gluing functions {\new reflect the configuration of the first derivatives of the surface patch parameterizations~$\f{S}^{(i)}$, $i \in \mathcal{I}_{\Omega}$, of the given $G^1$-smooth multi-patch surface~$\f{S}$} 
at the corresponding vertices. 

In case {\new that the given surface~$\f{S}$ possesses}  
a $C^1$-$G^1$ transition 
between the first and second ring of patches around an extraordinary vertex, like for G-splines, cf.~Fig.~\ref{GSplineCompareSmall}, left, one further precomputation step, now for the given surface~$\f{S}$, will be necessary. The reason is that for such configurations the inner ring patch gluing functions~$\alpha^{(i,i_j)}$ at the interfaces between the first and second ring {\new cannot be selected as linear polynomials} and have to be at least quadratic, cf.~\cite{Re95}. Enforcing these gluing functions to be linear would imply singularities at the corresponding vertices (Fig.~\ref{GSplineCompareSmall}, left, red vertices). By irregularly subdividing some second ring patches (cf.~Fig.~\ref{GSplineCompareSmall}, middle) allows now the construction of linear gluing functions for all interfaces as demonstrated in more detail in Example~\ref{exGspline}. 

For the proposed method in Section~\ref{subsec:exact_ASG1_surfaces}, we {\new further} preselect {\new for the desired AS-$G^1$ multi-patch spline surface~$\f{F}$ with the individual surface patch parameterizations~$\f{F}^{(i)} \in (\mathcal{S}^{\f{p},\f{r}}_{\f{k}})^3$ , $i \in \mathcal{I}_{\Omega}$}, the degree~$p$, regularity~$r$ and number of different inner knots~$k$ of the used spline space~$\mathcal{S}^{\f{p},\f{r}}_{\f{k}}$.  In any case, we require that $p \geq 3$, $1 \leq r \leq p-2$ and $k \geq \frac{5-p}{p-r-1}$ to have enough degrees of freedom in the vicinity of interface curves~$\Sigma^{(i)}$, $i \in \mathcal{I}_{\Sigma}^{\circ}$, and vertices~$\f{x}^{(i)}$, $i \in \mathcal{I}_{\chi}$, to separate the problem into local minimization problems in Section~\ref{subsec:exact_ASG1_surfaces}. When the given $G^1$-smooth multi-patch surface~$\f{S}$ is a spline surface with spline surface patch parameterizations~$\f{S}^{(i)} \in \left(\mathcal{S}^{\widetilde{\f{p}},\widetilde{\f{r}}}_{\widetilde{\f{k}}}\right)^3$, $\widetilde{\f{p}}=(\widetilde{p},\widetilde{p})$, $\widetilde{\f{r}}=(\widetilde{r},\widetilde{r})$ and $\widetilde{\f{k}}=(\widetilde{k},\widetilde{k})$, we additionally require $p \geq \widetilde{p}$, $r \leq \widetilde{r}$ and $k =2^{\ell}(\widetilde{k}+1)-1$, $\ell \in \N_0$. As long as the quality of the resulting multi-patch spline surface~$\f{F}$ is not good enough, we increase the degree~$p$ and/or reduce the regularity~$r$ and/or increase the number of inner knots~$k$ of the used spline space~$\mathcal{S}_{\f{k}}^{\f{p},\f{r}}$.
 
\subsection{Local design of AS-$G^1$ multi-patch spline surfaces} \label{subsec:exact_ASG1_surfaces}

We will present a locally based Lagrange multiplier method to generate an AS-$G^1$ multi-patch spline surface~$\f{F}$ from a given $G^1$-smooth but non-AS-$G^1$ multi-patch surface~$\f{S}$. The AS-$G^1$ multi-patch spline surface~$\f{F}$ will be generated by finding the control points~$\f{d}_{j_1,j_2}^{(i)}$ of the B-spline representations~\eqref{eq:Bspline_representation} of the individual surface patch parameterizations~$\f{F}^{(i)}$, $i \in \mathcal{I}_{\Omega}$, via solving a series of some subsequently but also of some parallel solvable small minimization problems.

The following lemma characterizes an AS-$G^1$ multi-patch spline surface~$\f{F}$ in the vicinity of an interface curve~$\Sigma^{(i)}$, $i \in \mathcal{I}^{\circ}_{\Sigma}$, and will provide the basis for the introduction of the method.
\begin{lem} \label{lem:surface_vicinity}
Let $\f{F}$ be an AS-$G^1$ multi-patch spline surface with the surface patch parameterizations~$\f{F}^{(i)}$, $i \in \mathcal{I}_{\Omega}$. For each interface curve~$\Sigma^{(i)}$, $i \in \mathcal{I}_{\Sigma}^{\circ}$, with $\Sigma^{(i)} \subset \overline{\Omega^{(i_1)}} \cap \overline{\Omega^{(i_2)}}$, the associated surface patch parameterizations~$\f{F}^{(i_1)}$ and $\f{F}^{(i_2)}$ possess the representation
\[
\f{F}^{(i_1)}(\xi_1,\xi_2) = \f{f}_{0}^{(i)}(\xi_2) + \xi_1 \left( \alpha^{(i,i_1)}(\xi_2) \f{f}_{1}^{(i)}(\xi_2) - \beta^{(i,i_1)}(\xi_2) \left(\f{f}_0^{(i)}\right)'(\xi_2) \right) + \mathcal{O}(\xi_1^2)
\]
and
\[
 \f{F}^{(i_2)}(\xi_1,\xi_2) = \f{f}_{0}^{(i)}(\xi_1) + \xi_2 \left( -\alpha^{(i,i_2)}(\xi_1) \f{f}_{1}^{(i)}(\xi_1) - \beta^{(i,i_2)}(\xi_1)\left(\f{f}_{0}^{(i)} \right)'(\xi_1) \right) + \mathcal{O}(\xi_2^2)
\]
in the vicinity of the interface curve~$\Sigma^{(i)}$. 
\end{lem}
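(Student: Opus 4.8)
The plan is to read off both expansions from a first-order Taylor expansion in the transversal variable, using the $C^0$-continuity condition~\eqref{eq:C0continuity} to pin down the zeroth-order term and the defining relations~\eqref{eq:f1_1}--\eqref{eq:f1_2} of the transversal derivative to pin down the first-order term. Thus the lemma will turn out to be essentially a reorganisation of the AS-$G^1$ definition rather than a genuinely new statement.

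First I would treat $\f{F}^{(i_1)}$, whose interface with $\Omega^{(i_2)}$ is located at $\xi_1=0$. Since each surface patch parameterization is piecewise polynomial and hence smooth on the first knot span adjacent to the interface, Taylor's theorem in $\xi_1$ gives $\f{F}^{(i_1)}(\xi_1,\xi_2) = \f{F}^{(i_1)}(0,\xi_2) + \xi_1\,\partial_1\f{F}^{(i_1)}(0,\xi_2) + \mathcal{O}(\xi_1^2)$. By~\eqref{eq:C0continuity} the zeroth-order term equals $\f{f}_0^{(i)}(\xi_2)$, and differentiating the identity $\f{F}^{(i_1)}(0,\xi_2)=\f{f}_0^{(i)}(\xi_2)$ in $\xi_2$ yields $\partial_2\f{F}^{(i_1)}(0,\xi_2) = (\f{f}_0^{(i)})'(\xi_2)$. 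Because $\alpha^{(i,i_1)}$ is nowhere zero (a consequence of the sign condition~\eqref{eq:alphas}), the definition~\eqref{eq:f1_1}, together with $\f{f}_1^{(i,i_1)}=\f{f}_1^{(i)}$ from~\eqref{eq:AS_G1}, can be solved for the transversal derivative to give $\partial_1\f{F}^{(i_1)}(0,\xi_2) = \alpha^{(i,i_1)}(\xi_2)\,\f{f}_1^{(i)}(\xi_2) - \beta^{(i,i_1)}(\xi_2)\,(\f{f}_0^{(i)})'(\xi_2)$. Substituting the two terms into the Taylor expansion produces the claimed formula for $\f{F}^{(i_1)}$.

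The expansion for $\f{F}^{(i_2)}$ follows by the symmetric argument, now with the interface at $\xi_2=0$: Taylor expansion in $\xi_2$ gives $\f{F}^{(i_2)}(\xi_1,\xi_2) = \f{f}_0^{(i)}(\xi_1) + \xi_2\,\partial_2\f{F}^{(i_2)}(\xi_1,0) + \mathcal{O}(\xi_2^2)$, where $\f{F}^{(i_2)}(\xi_1,0)=\f{f}_0^{(i)}(\xi_1)$ by~\eqref{eq:C0continuity} and $\partial_1\f{F}^{(i_2)}(\xi_1,0)=(\f{f}_0^{(i)})'(\xi_1)$ by differentiation. Solving~\eqref{eq:f1_2} for $\partial_2\f{F}^{(i_2)}(\xi_1,0)$ — again using $\alpha^{(i,i_2)}\neq 0$ — gives $\partial_2\f{F}^{(i_2)}(\xi_1,0) = -\alpha^{(i,i_2)}(\xi_1)\,\f{f}_1^{(i)}(\xi_1) - \beta^{(i,i_2)}(\xi_1)\,(\f{f}_0^{(i)})'(\xi_1)$, and inserting this into the expansion finishes the proof.

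There is no real obstacle in this argument; the only points deserving a word of care are that the gluing functions $\alpha^{(i,i_\ell)}$ appearing in the denominators of~\eqref{eq:f1_1}--\eqref{eq:f1_2} are genuinely nonvanishing, which is guaranteed by~\eqref{eq:alphas}, and that the remainders are legitimately $\mathcal{O}(\xi_1^2)$ respectively $\mathcal{O}(\xi_2^2)$ uniformly in the tangential variable on $[0,1]$, which holds since the patch parameterizations are $C^\infty$ on the relevant knot span. If one prefers an expansion valid across several knot spans in the transversal direction, it suffices to invoke the regularity $r\geq 1$ of the spline space, which still guarantees enough differentiability for a first-order Taylor estimate.
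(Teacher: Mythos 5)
Your argument is correct and coincides with the paper's own proof: both identify the zeroth-order term via the $C^0$ condition, solve~\eqref{eq:f1_1} and~\eqref{eq:f1_2} for the transversal derivatives $\partial_1\f{F}^{(i_1)}(0,\xi_2)$ and $\partial_2\f{F}^{(i_2)}(\xi_1,0)$ using $\f{f}_1^{(i)}=\f{f}_1^{(i,i_\ell)}$, and insert these into a first-order Taylor expansion at the interface. Your additional remarks on the nonvanishing of $\alpha^{(i,i_\ell)}$ and the uniformity of the remainder are sensible but do not change the substance of the argument.
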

\begin{proof}
Let $\Sigma^{(i)}$, $i \in \mathcal{I}_{\Sigma}^{\circ}$, with $\Sigma^{(i)} \subset \overline{\Omega^{(i_1)}} \cap \overline{\Omega^{(i_2)}}$, be an arbitrary but fixed interface curve. Due to 
\begin{equation} \label{eq:equalityf0}
\f{f}_{0}^{(i)}(\xi)=\f{F}^{(i,i_{1})}(0,\xi)=\f{F}^{(i,i_{2})}(\xi,0)
\end{equation}
and $\f{f}_{1}^{(i)}=\f{f}_{1}^{(i,i_{\ell})}$, $\ell =1,2$, and by considering Eqs.~\eqref{eq:f1_1} and \eqref{eq:f1_2}, we obtain
\begin{equation} \label{eq:partial1F1}
 \partial_1 \f{F}^{(i_1)}(0,\xi_2)= \alpha^{(i,i_1)}(\xi_2) \f{f}_{1}^{(i)}(\xi_2) - \beta^{(i,i_1)}(\xi_2) \left(\f{f}_0^{(i)}\right)'(\xi_2)
\end{equation}
and
\begin{equation} \label{eq:partial2F2}
 \partial_2 \f{F}^{(i_2)}(\xi_1,0) =  -\alpha^{(i,i_2)}(\xi_1) \f{f}_{1}^{(i)}(\xi_1) - \beta^{(i,i_2)}(\xi_1)\left(\f{f}_{0}^{(i)} \right)'(\xi_1).
\end{equation}
By means of Taylor expansion of $\f{F}^{(i_{\ell})}$, $\ell=1,2$, at the interface~$\Sigma^{(i)}$, we get
\[
 \f{F}^{(i_1)}(\xi_1,\xi_2)= \f{F}^{(i_1)}(0,\xi_2) + \partial_1 \f{F}^{(i_1)} (0,\xi_2) \xi_1 + \mathcal{O}(\xi_1^2)
\]
and
\[
 \f{F}^{(i_2)}(\xi_1,\xi_2)= \f{F}^{(i_2)}(\xi_1,0) + \partial_2 \f{F}^{(i_2)}(\xi_1,0) \xi_2 + \mathcal{O}(\xi_2^2),
\]
which proves the lemma by using the Eqs.~\eqref{eq:equalityf0}, \eqref{eq:partial1F1} and \eqref{eq:partial2F2}.
\end{proof}
Due to Lemma~\ref{lem:surface_vicinity}, we will enforce for each interface curve~$\Sigma^{(i)}$, $i \in \mathcal{I}_{\Sigma}^\circ$, that $\f{f}_{0}^{(i)} \in \left(\mathcal{S}^{p,r+1}_{k}\right)^3$ and $\f{f}_{1}^{(i)} \in \left(\mathcal{S}^{p-1,r}_{k}\right)^3$, which means that the functions $\f{f}_0^{(i)}$ and $\f{f}_{1}^{(i)}$ possess the B-spline representation
\begin{equation} \label{eq:repf0}
 \f{f}_{0}^{(i)}(\xi) = \sum_{j \in \mathcal{J}_0} \widehat{\f{d}}_{0,j}^{(i)} N_{j}^{p,r+1}(\xi), \quad \widehat{\f{d}}_{0,j}^{(i)} \in \R^3,
\end{equation}
and
\begin{equation} \label{eq:repf1}
  \f{f}_{1}^{(i)}(\xi) = \sum_{j \in \mathcal{J}_1} \widehat{\f{d}}_{1,j}^{(i)} N_{j}^{p-1,r}(\xi), \quad \widehat{\f{d}}_{1,j}^{(i)} \in \R^3,
\end{equation}
respectively.

In our proposed local method, {\new we first precompute for the desired AS-$G^1$ multi-patch spline surface~$\f{F}$ with the individual surface patch parameterizations~$\f{F}^{(i)} \in (\mathcal{S}^{\f{p},\f{r}}_{\f{k}})^3$, $i \in \mathcal{I}_{\Omega}$, the linear gluing functions $\alpha^{(i,i_{\ell})}$ and $\beta^{(i,i_{\ell})}$, $\ell=1,2$, for each interface curve~$\Sigma^{(i)}$ with $\Sigma^{(i)} \subset \overline{\Omega^{(i_1)}} \cap \overline{\Omega^{(i_2)}}$, and further preselect the degree~$p$, regularity~$r$ and number of different inner knots~$k$ of the used spline space~$\mathcal{S}^{\f{p},\f{r}}_{\f{k}}$.} {\new Then,} we determine the control points~$\f{d}^{(i)}_{j_1,j_2}$ {\new of the individual surface patch parameterizations~$\f{F}^{(i)}$} in three consecutive steps, namely by first solving for them a minimization problem for each inner vertex~$\f{x}^{(m)}$, $m \in \mathcal{I}^{\circ}_{\chi}$, and for each boundary vertex~$\f{x}^{(m)}$, $m \in \mathcal{I}^{\Gamma}_{\chi}$, with patch valency~$\nu_m \geq 2$, then one for each interface curve~$\Sigma^{(m)}$, $m \in \mathcal{I}^{\circ}_{\Sigma}$, and finally one for each surface patch~$\Omega^{(i)}$, $i \in \mathcal{I}_{\Omega}$. In doing so, the resulting control points~$\f{d}_{j_1,j_2}^{(i)}$ from one step will be employed in the following steps. The idea is to locally minimize the weighted $H^1$-error~\eqref{eq:weightedH1error} for the vertices, interface curves and patches and to additionally enforce by using Lagrange multipliers that the AS-$G^1$ conditions are fulfilled and that the functions~$\f{f}_0^{(i)}$ and $\f{f}_{1}^{(i)}$ for each interface curve~$\Sigma^{(i)}$ possess the B-spline representations~\eqref{eq:repf0} and \eqref{eq:repf1}, respectively. In detail, we minimize for each inner vertex~$\f{x}^{(m)}$, $m \in \mathcal{I}^{\circ}_{\chi}$, and for each boundary vertex~$\f{x}^{(m)}$, $m \in \mathcal{I}^{\Gamma}_{\chi}$, with patch valency~$\nu_m \geq 2$, the objective function
\begin{eqnarray*}
& & \sum_{j=1}^{\nu_m}\sum_{\substack{\ell_1,\ell_2 \in \N_0 \\ 0<\ell_1+\ell_2\leq 2}} \sigma^{\ell_1+\ell_2} \left\|\partial^{\ell_1}_1\partial^{\ell_2}_2\f{F}^{(m_{j})}(0,0)  - \partial^{\ell_1}_1\partial^{\ell_2}_2\f{S}^{(m_{j})}(0,0) \right\|^2 + \\
&  & \sum_{j=1}^{\nu_m-\tau_m}  
\sum_{\ell=0}^1 \sum_{w=0}^{2-\ell}  \sigma^{w+\ell} \f{\lambda}_{\ell,w}^{(m,j)} \cdot \left(\partial_{\xi}^{w} \f{f}_\ell^{(j,m_{j})}(0) -  \partial_{\xi}^{w}\f{f}_\ell^{(j,m_{j+1})}(0) \right) 
\end{eqnarray*}
for the corresponding control points~$\f{d}_{j_1,j_2}^{(i)}$ and Lagrange multipliers~$\f{\lambda}_{\ell,w}^{(m,j)} = 
\left( {\lambda}_{\ell,w,i}^{(m,j)} \right)_{i=1}^3$, and for each interface curve~$\Sigma^{(m)} \subset \overline{\Omega^{(m_1)}} \cap \overline{\Omega^{(m_2)}}$, $m \in \mathcal{I}^{\circ}_{\Sigma}$, the objective function

\begin{eqnarray*}
& &  \sum_{\substack{\ell_1,\ell_2 \in \N_0\\ \ell_1+\ell_2\leq 1}} \sigma^{\ell_1+\ell_2} \int_{[0,1]} \left\|\partial^{\ell_1}_1\partial^{\ell_2}_2\f{F}^{(m_1)}(0,\xi)  - \partial^{\ell_1}_1\partial^{\ell_2}_2\f{S}^{(m_1)}(0,\xi) \right\|^2  d\xi + \\
& & \sum_{\substack{\ell_1,\ell_2 \in \N_0 \\ \ell_1+\ell_2\leq 1}} \sigma^{\ell_1+\ell_2}  \int_{[0,1]} \left\|\partial^{\ell_1}_1\partial^{\ell_2}_2\f{F}^{(m_2)}(\xi,0)  - \partial^{\ell_1}_1\partial^{\ell_2}_2\f{S}^{(m_2)}(\xi,0) \right\|^2  d\xi + \\
& & \sum_{\ell=1}^2 \left( \sum_{j=3(\ell-1)}^{ n-3\ell+2} \f{\mu}_{0,j}^{(m,m_\ell)}\cdot \left( \f{f}_0^{(m,m_\ell)}( \zeta_j^{p,r})-\f{f}_0^{(m)}( \zeta_j^{p,r}) \right) + \right.  \\ 
& & \left. \sum_{j=2(\ell-1)}^{ n-2\ell+1}  \sigma \,\alpha^{(m,m_\ell)}( \zeta_j^{p,r}) \, \f{\mu}_{1,j}^{(m,m_\ell)} \cdot\left( \f{f}_1^{(m,m_\ell)}( \zeta_j^{p,r})-\f{f}_1^{(m)}( \zeta_j^{p,r}) \right)  \right)  \\ 
\end{eqnarray*}
for the involved control points~$\f{d}_{j_1,j_2}^{(i)}$, $\widehat{\f{d}}_{0,j}^{(i)}$ and $\widehat{\f{d}}_{1,j}^{(i)}$, and Lagrange multipliers~$\f{\mu}_{\ell,j}^{(m,m_{w})} = \left( {\mu}_{\ell,j,i}^{(m,m_{w})}\right)_{i=1}^3$, and for each surface patch~$\Omega^{(i)}$, $i \in \mathcal{I}_{\Omega}$, the term
\[
\sum_{\substack{\ell_1,\ell_2 \in \N_0 \\ \ell_1+\ell_2\leq 1}} \sigma^{\ell_1+\ell_2} \int_{[0,1]^2} 
 \left\|\partial^{\ell_1}_1\partial^{\ell_2}_2\f{F}^{(i)}(\f{\xi})  - \partial^{\ell_1}_1\partial^{\ell_2}_2\f{S}^{(i)}(\f{\xi}) \right\|^2  d \f{\xi}
\]
for the corresponding control points~$\f{d}_{j_1,j_2}^{(i)}$, where $\sigma$ is the uniform scaling factor given by $\sigma=\frac{1}{p (k+1)}$ as already described in the problem statement, and $\tau_m$ is equal to
\begin{equation} \label{eq:tau}
\tau_m =\begin{cases} 
         0 &  \mbox{ if }\f{x}^{(m)} \mbox{ is an inner vertex, i.e. }m \in \mathcal{I}_{\chi}^{\circ}, \\
         1 &  \mbox{ if }\f{x}^{(m)} \mbox{ is a boundary vertex, i.e. }m \in \mathcal{I}_{\chi}^{\Gamma}.
        \end{cases}
\end{equation}
Note that the individual minimization problems in each step can be solved in parallel.   

\begin{rem} \label{remarkGlobal}
Inspired by the local method above, we can also derive a novel global technique which will perform all single local steps just in one merged and slightly adapted global step. The idea is to construct the AS-$G^1$ multi-patch spline surface~$\f{F}$ by minimizing the objective function
\begin{eqnarray*}
& &\sum_{i \in \mathcal{I}_{\Omega}} \left(  \sum_{\substack{\ell_1,\ell_2 \in \N_0 \\ \ell_1+\ell_2\leq 1}} \sigma^{\ell_1+\ell_2} \int_{[0,1]^2} \left\| \partial^{\ell_1}_1\partial^{\ell_2}_2\f{F}^{(i)}(\f{\xi})  - \partial^{\ell_1}_1\partial^{\ell_2}_2\f{S}^{(i)}(\f{\xi}) \right\|^2  d \f{\xi} \right) + \\
&  & \sum_{\substack{m \in \mathcal{I}_{\chi} \\ \nu_{m} \geq 2}} \sum_{j=1}^{\nu_m-\tau_m}  
\sum_{\ell=0}^1 \sum_{w=0}^{2-\ell}  \sigma^{w+\ell} \f{\lambda}_{\ell,w}^{(m,j)} \cdot \left(\partial_{\xi}^{w} \f{f}_\ell^{(j,m_{j})}(0) -  \partial_{\xi}^{w}\f{f}_\ell^{(j,m_{j+1})}(0) \right)   +\\ 
& & \sum_{m \in \mathcal{I}_{\Sigma}^{\circ}} \sum_{\ell=1}^2 \left( \sum_{j=3(\ell-1)}^{ n-3\ell+2} \f{\mu}_{0,j}^{(m,m_\ell)}\cdot \left( \f{f}_0^{(m,m_\ell)}( \zeta_j^{p,r})-\f{f}_0^{(m)}( \zeta_j^{p,r}) \right) + \right.  \\ 
& & \left. \sum_{j=2(\ell-1)}^{ n-2\ell+1}  \sigma \,\alpha^{(m,m_\ell)}( \zeta_j^{p,r}) \, \f{\mu}_{1,j}^{(m,m_\ell)} \cdot\left( \f{f}_1^{(m,m_\ell)}( \zeta_j^{p,r})-\f{f}_1^{(m)}( \zeta_j^{p,r}) \right)  \right)
\end{eqnarray*}
for the control points~$\f{d}_{j_1,j_2}^{(i)}$ of the single surface patch parameterizations~$\f{F}^{(i)}$, $i \in \mathcal{I}_{\Omega}$, and for the Lagrange multipliers $\f{\lambda}_{\ell,w}^{(m,j)} = 
\left( {\lambda}_{\ell,w,i}^{(m,j)} \right)_{i=1}^3$ and $\f{\mu}_{\ell,j}^{(m,m_{w})} = \left( {\mu}_{\ell,j,i}^{(m,m_{w})}\right)_{i=1}^3$, where $\sigma$ and $\tau_m$ are given as in the local approach above.

The presented global approach is like the local method applicable to any $G^1$-smooth but non-AS-$G^1$ multi-patch surface. It can be handled without the need of symbolic computation, and can be seen therefore also as an extension of the two existing global techniques~\cite{FaJuKaTa22,KaSaTa17b}. The presented global method leads to a slightly reduced approximation error compared to the local method, as shown in Example~\ref{exNintendo}, {\new since the global method computes all control points in one step in contrast to three consecutive steps which provides more freedom to determine the control points}. However, this is achieved at the price of solving one big global minimization problem instead of small local ones which can be done in parallel, and makes the global technique only practicable for multi-patch surfaces with a relatively small number of patches.  
\end{rem}

\section{Numerical examples} \label{sec:numerical_examples}

We will first present several examples of AS-$G^1$ multi-patch spline surfaces constructed by the proposed locally based technique in Section~\ref{subsec:exact_ASG1_surfaces}. Afterwards, we will demonstrate that the resulting AS-$G^1$ multi-patch spline surfaces are especially suited for the solving of fourth order partial differential equations over them on the basis of the biharmonic problem.

\subsection{Examples of AS-$G^1$ multi-patch spline surfaces} \label{subsec:examples_ASG1_surfaces}

While the first two examples will deal with planar multi-patch domains, the remaining tree examples will construct AS-$G^1$ approximants for given $G^1$-smooth but non-AS-$G^1$ multi-patch surfaces. In all examples, we will compare the resulting AS-$G^1$ multi-patch surface~$\f{F}$ with the given $G^1$-smooth but non-AS-$G^1$ multi-patch surface~$\f{S}$ by computing the relative $L^2$-error $\epsilon_{L^2}$ 
and the relative weighted $H^1$-error $\epsilon_{H^1}$ given by 
\[
\epsilon_{L^2} = \frac{\left\|\f{F}  - \f{S} \right\|_{L^2}}{\left\|\f{S} \right\|_{L^2}} \quad\mbox { and } \quad \epsilon_{H^1} = \frac{\left\|\f{F}  - \f{S} \right\|_{H^1,\sigma}}{\left\|\f{S} \right\|_{H^1,\sigma}},
\]
respectively, where $\left\|\f{\cdot} \right\|_{H^1,\sigma}$ is the weighted $H^1$-norm defined in \eqref{eq:weightedH1error}.

\paragraph{AS-$G^1$ planar multi-patch geometries}

We will present two examples of AS-$G^1$ planar multi-patch geometries constructed from given non-AS-$G^1$ planar multi-patch parameterizations. 
\begin{ex}\label{exBunny}
We consider the given non-AS-$G^1$ planar multi-patch geometry~$\f{S}$ depicted in Fig.~\ref{bunnyCompare}, left, which is composed of ten bicubic Bézier patches. We approximate the parameterization $\f{S}$ by an AS-$G^1$ multi-patch geometry~$\f{F}$, see Fig.~\ref{bunnyCompare}, middle, which uses splines from the space $(\mathcal{S}^{\f{3},\f{1}}_{\f{2}})^2$ to represent the single patches~$\f{F}^{(i)}$, $i \in \mathcal{I}_{\Omega}$. Fig.~\ref{bunnyCompare}, right, provides {\new a comparison} between the given non-AS-$G^1$ multi-patch geometry~$\f{S}$ and its AS-$G^1$ approximant~$\f{F}$. The two multi-patch geometries exhibit a great similarity, which is confirmed by the relative $L^2$- and $H^1$-error 
equal to $\epsilon_{L^2} = 9.7 \cdot 10^{-4}$ and $\epsilon_{H^1} =5.5 \cdot 10^{-3}$, respectively. 

\begin{figure}[h!]
	\centering
        \includegraphics[scale=0.385]{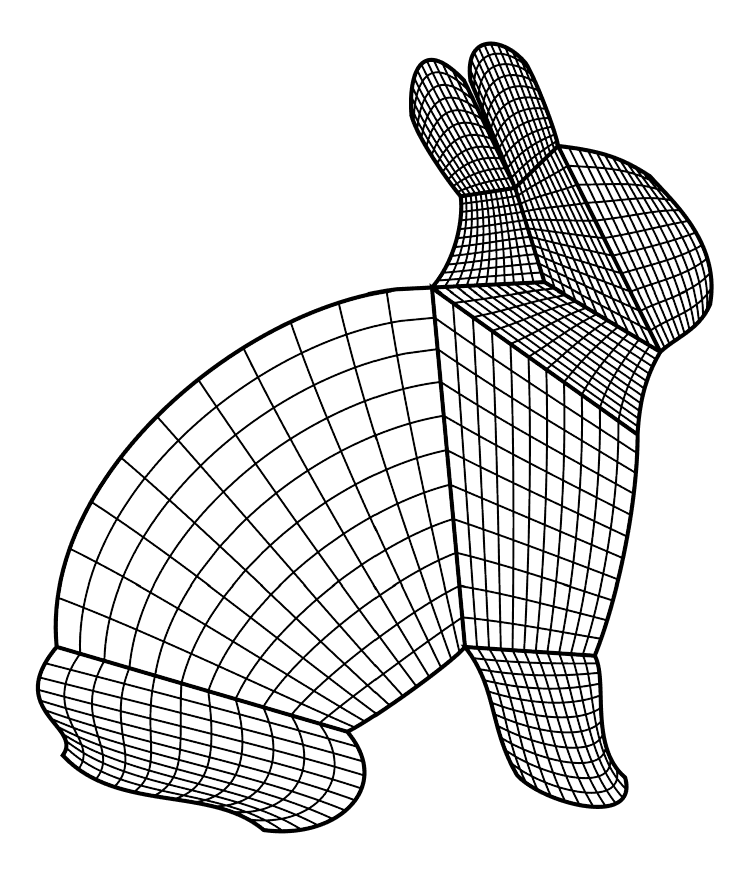}
		\includegraphics[scale=0.385]{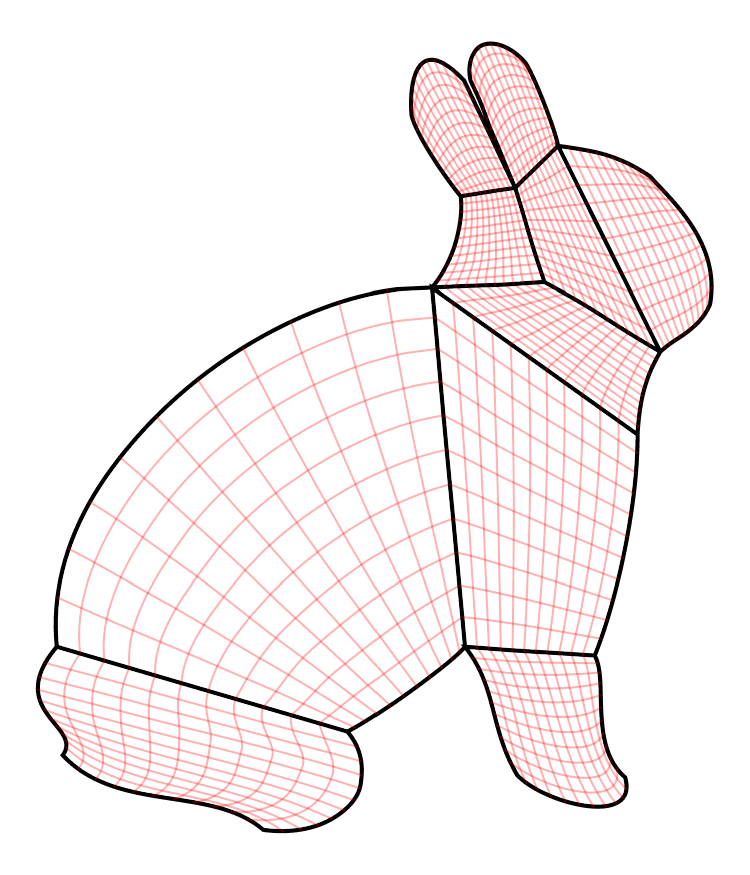}
        \includegraphics[scale=0.48]{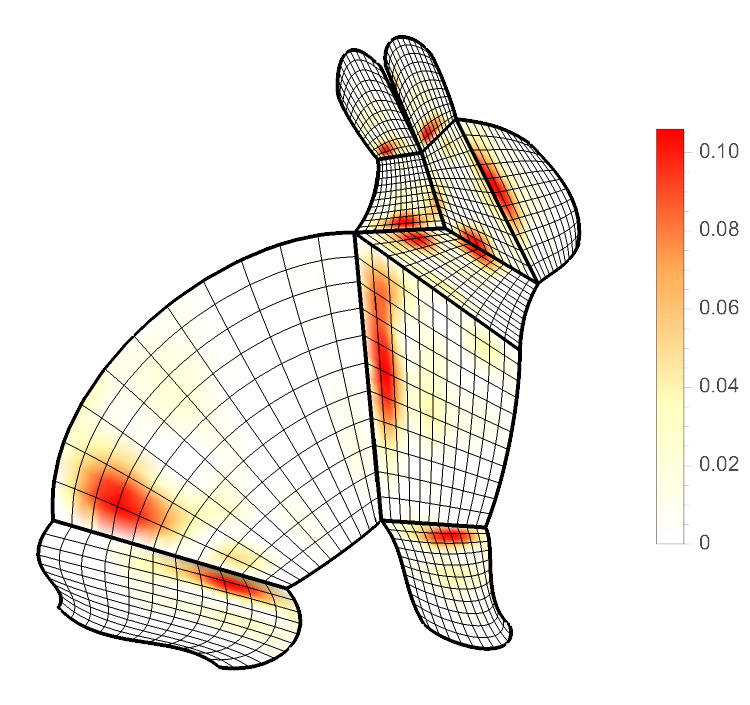}		
        \caption{Example~\ref{exBunny}. The non-AS-$G^1$ planar multi-patch geometry~$\f{S}$ (left), its AS-$G^1$ approximant~$\f{F}$ (middle) and {\new the differences }
        between them (right).}
        \label{bunnyCompare}
\end{figure}
\end{ex}

\begin{ex}\label{exNintendo}
In the second planar example, we approximate the given non-AS-$G^1$ planar 13-patch bicubic Bézier
parameterization~$\f{S}$ (Fig.~\ref{nintendoCompare}, left) by an AS-$G^1$ planar multi-patch geometry~$\f{F}$ (Fig.~\ref{nintendoCompare}, middle) by employing the spline space {\new $(\mathcal{S}^{\f{4},\f{1}}_{\f{2}})^2$} for parameterizing the single patches ~$\f{F}^{(i)}$, $i \in \mathcal{I}_{\Omega}$. As in Example \ref{exBunny}, {\new a comparison} (Fig.~\ref{nintendoCompare}, right) between the two geometries reveals a very good approximation, validated by the relative $L^2$- and $H^1$-error equal to {\new $\epsilon_{L^2} = 1.2 \cdot 10^{-3}$ and $\epsilon_{H^1} = 3.4 \cdot 10^{-3}$}, respectively. 

For {\new further} comparison, we also perform our global algorithm from Remark~\ref{remarkGlobal}. The resulting AS-$G^1$ multi-patch geometry is 
{\new very close to} the one obtained by the local approach, and is therefore not presented here. But as expected, the presented global method leads to slightly reduced approximation errors compared to the local method, namely to a relative $L^2$-error {\new $\epsilon_{L^2} = 1.2 \cdot 10^{-5}$ and to a relative $H^1$-error $\epsilon_{H^1} = 2.3 \cdot 10^{-3}$}. Recall that this is achieved at the price of solving one big global minimization problem instead of small local ones. 
\begin{figure}[h!]
	\centering
        \includegraphics[scale=0.405]
        {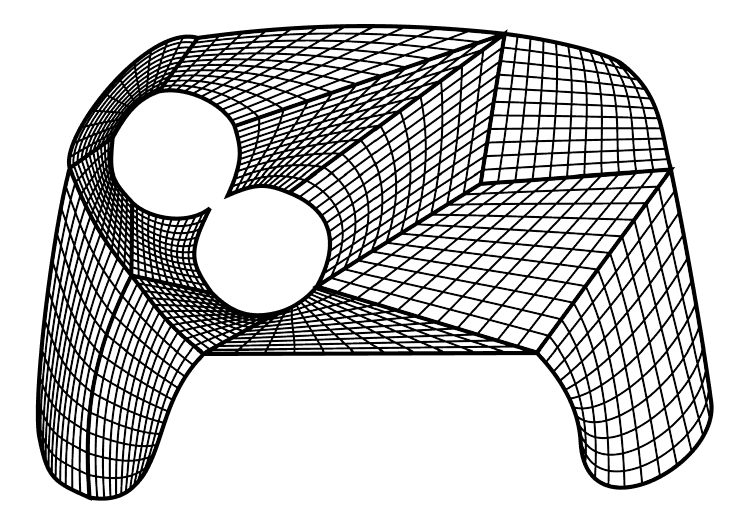}
		\includegraphics[scale=0.297]{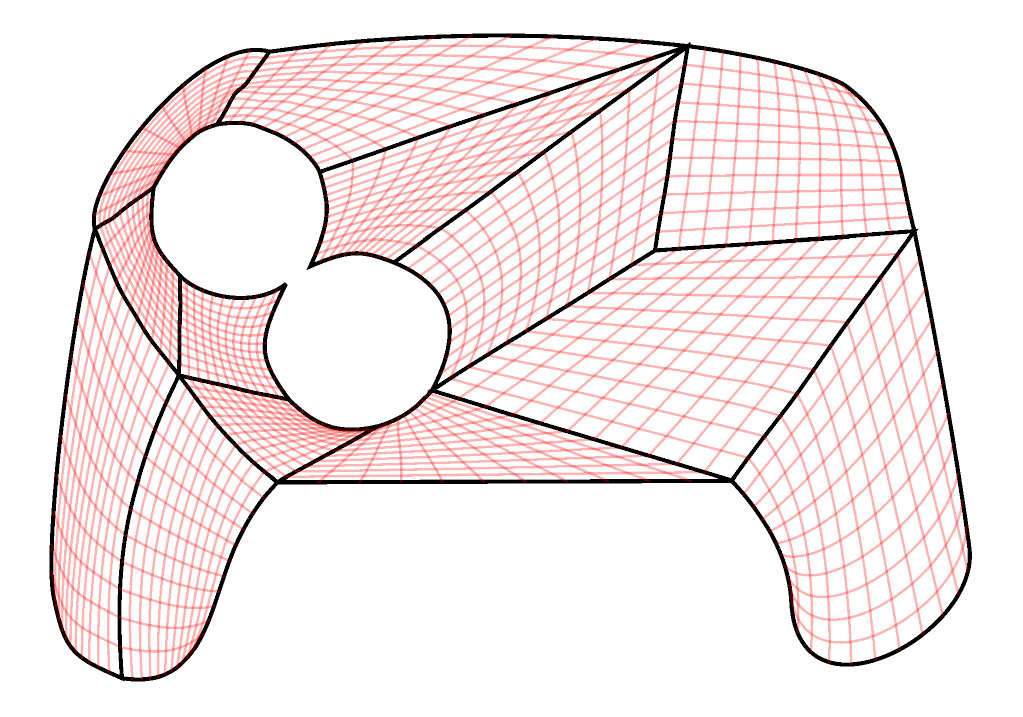}
		\includegraphics[scale=0.29]{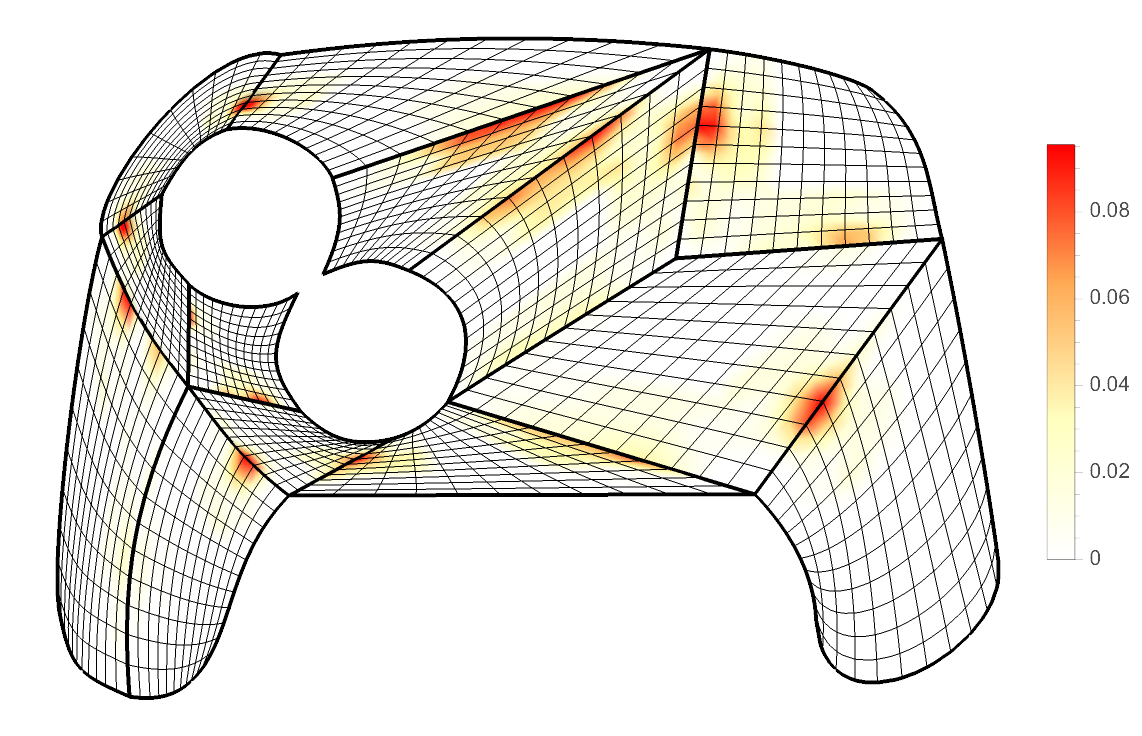}	
  \caption{Example~\ref{exNintendo}. The non-AS-$G^1$ planar multi-patch geometry~$\f{S}$ (left), its AS-$G^1$ approximant~$\f{F}$ (middle) and {\new the differences }
  between them (right).}
  \label{nintendoCompare}
\end{figure}
{\new 
In order to emphasize the time efficiency of the local method over the global one, we compare the CPU computational times of both methods for the 13-patch domain above as well as additionally for a 52-patch domain which is obtained from the 13-patch domain 
by subdividing each of the 13 patches into four new ones, cf.~Fig.~\ref{nintendoCompare2}.
 We have used an implementation in {\em Mathematica} on a M1 Pro processor with 10 cores. 
For the 13-patch domain the local approach reduces the required time by a factor $23.0$, while for the 52-patch domain the reduction is already by a factor 
$34.7$, see Tab.~\ref{tab:tab1}. These factors could be even improved if for the local method the work would be distributed within even more processors/cores.  
Again, for the 52-patch domain the global method leads to slightly reduced approximation errors compared to the local method, namely the relative $L^2$-errors are $\epsilon_{L^2} = 8.9 \cdot 10^{-6}$ and $\epsilon_{L^2} = 3.7 \cdot 10^{-4}$, respectively, while the relative $H^1$-errors are $\epsilon_{H^1} = 1.3 \cdot 10^{-3}$ and $\epsilon_{H^1} = 1.8 \cdot 10^{-3}$, respectively.}
\begin{table}[htb]
    \centering
    {\new 
    \begin{tabular}{|c||c|c|}
         \hline
         & local method & global method \\
        \hline
        \hline
        13-patch domain & 34.15 s & 785.68 s\\
        \hline
        52-patch domain & 134.32 s & 4660.58 s\\
        \hline
    \end{tabular}
    \caption{Example~\ref{exNintendo}. CPU computational times for the 13-patch and 52-patch domains for our local and global method.}
    }
    \label{tab:tab1}
\end{table}
\begin{figure}[h!]
	\centering
		\includegraphics[scale=0.31]{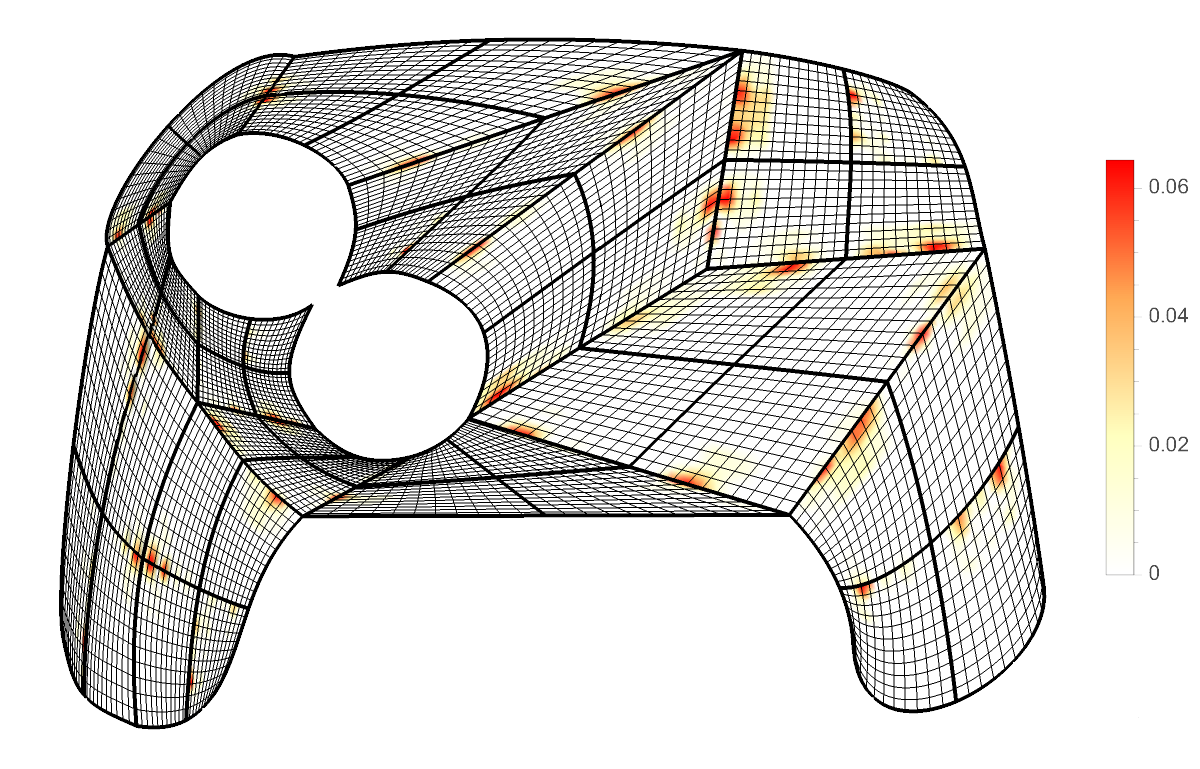}
        \hskip3em
		\includegraphics[scale=0.335]{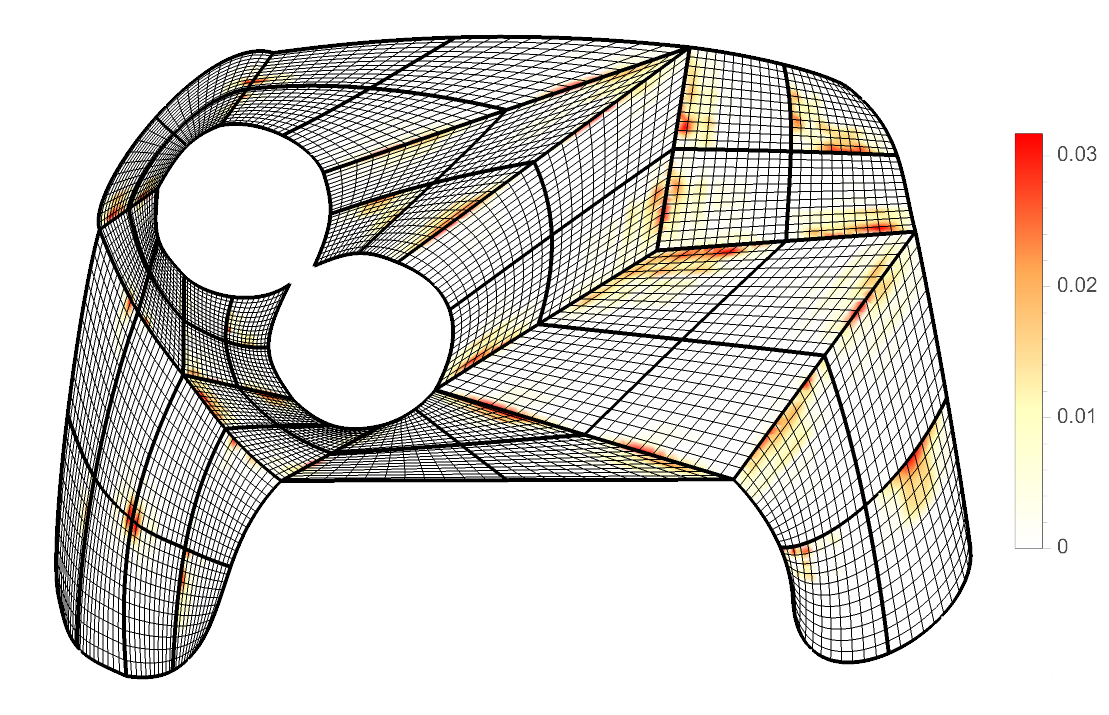}	
  \caption{{\new  Example~\ref{exNintendo}.  
  Error plots for the local method (left) and the global method (right) for the 52-patch domain.}}
  \label{nintendoCompare2}
\end{figure}
\end{ex}

\paragraph{AS-$G^1$ multi-patch surfaces}

We will present two examples of closed surfaces followed by an example of an open surface. 

\begin{ex}\label{exGoursat}
As first example of a closed surface, we consider the six-patch surface~$\f{S}$ shown in Fig.~\ref{goursatCompare}, left, which represents the \textit{Goursat} surface given by the implicit equation
$$
x^4 + y^4 + z^4 - 96(x^2+y^2+z^2) = 48,
$$
and which is generated by using the bubble-patch approach~\cite{KaByJu11} following the construction concept presented in~\cite[Example~5]{KaSaTa17b}. We approximate the given $G^1$-smooth but non-AS-$G^1$ multi-patch surface~$\f{S}$ by an AS-$G^1$ multi-patch spline surface~$\f{F}$, see Fig.~\ref{goursatCompare}, middle, whose surface patch parameterizations~$\f{F}^{(i)}$, $i \in \mathcal{I}_{\Omega}$ belong to the spline space~$(\mathcal{S}^{\f{4},\f{1}}_{\f{3}})^3$. The given surface~$\f{S}$ and the constructed AS-$G^1$ spline surface~$\f{F}$ exhibit a great similarity when {\new examining the differences between them in Fig.~\ref{goursatCompare}, right}. 
The similarity is further confirmed with the relative $L^2$- and $H^1$-error being equal to $\epsilon_{L^2} = 2.5 \cdot 10^{-4}$ and $\epsilon_{H^1} = 9.7 \cdot 10^{-4}$, respectively. 

\begin{figure}[ht]
	\centering
        \includegraphics[scale=0.575]{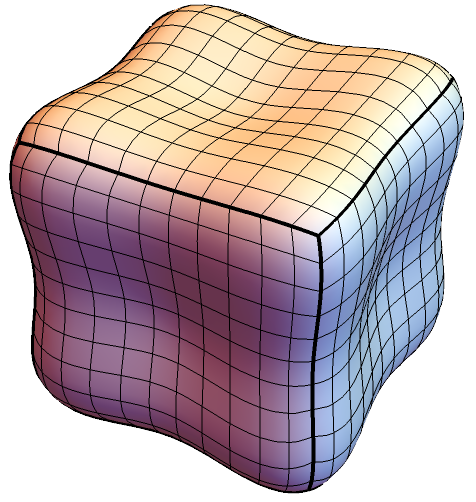}
        \hspace{0.3cm}
		\includegraphics[scale=0.58]{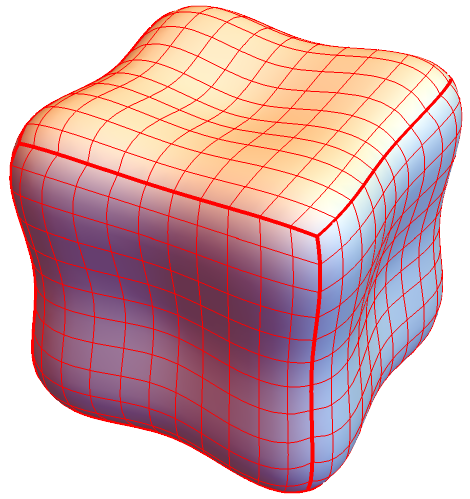}
  \hspace{0.3cm}
  \includegraphics[scale=0.405]{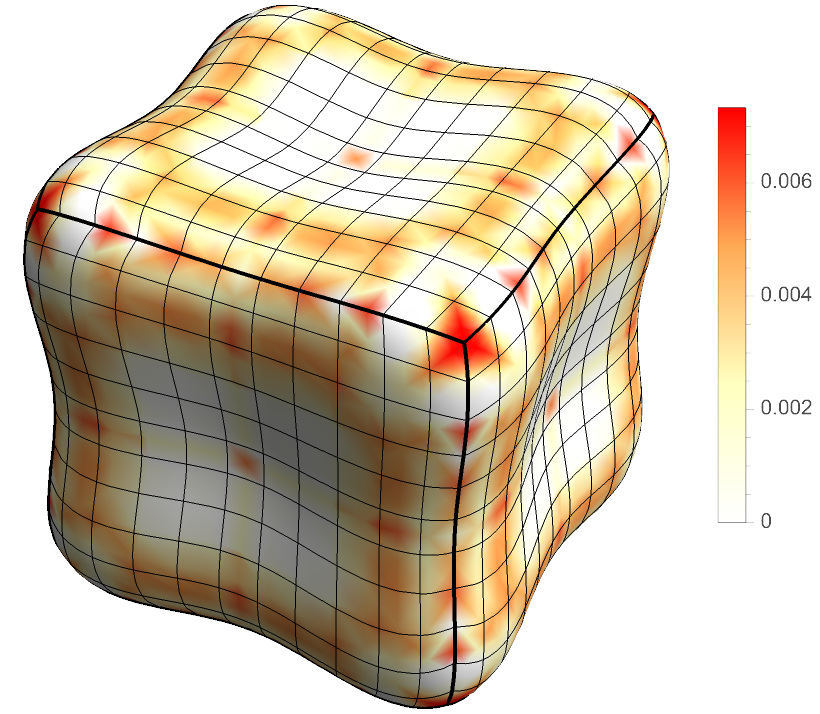}
	\caption{Example~\ref{exGoursat}. The given non-AS-$G^1$ multi-patch surface~$\f{S}$ (left), its AS-$G^1$ approximant~$\f{F}$ (middle) and {\new the differences }
 between them (right).}	
 \label{goursatCompare}
\end{figure}
\end{ex}

\begin{ex}\label{exTorus}
The second closed surface example deals with the double torus shown in Fig.~\ref{torusCompare}, above. This given $G^1$-smooth but not AS-$G^1$ multi-patch surface~$\f{S}$ consists of $74$ surface patches, and is again constructed via the bubble-patch approach~\cite{KaByJu11} representing now the implicitly given double torus
$$
\left(x(x-1)^2(x-2)+y^2\right)^2 + z^2 = \frac{1}{100}.
$$
We approximate the surface~$\f{S}$ by an AS-$G^1$ multi-patch spline surface~$\f{F}$ (see Fig.~\ref{torusCompare}, below) using the spline space $(\mathcal{S}^{\f{5},\f{1}}_{\f{2}})^3$ for the single surface patch parameterizations~$\f{F}^{(i)}$, $i \in \mathcal{I}_{\Omega}$. Again, a good approximation of the given non-AS-$G^1$ surface~$\f{S}$ by its AS-$G^1$ approximant~$\f{F}$ can be observed, cf. Fig.~\ref{torusCompare}, with a relative $L^2$- and $H^1$-error of $\epsilon_{L^2} =2.1 \cdot 10^{-3}$ and $\epsilon_{H^1} = 4.5 \cdot 10^{-3}$, respectively. 

\begin{figure}[ht]
	\centering
        \includegraphics[scale=0.45]{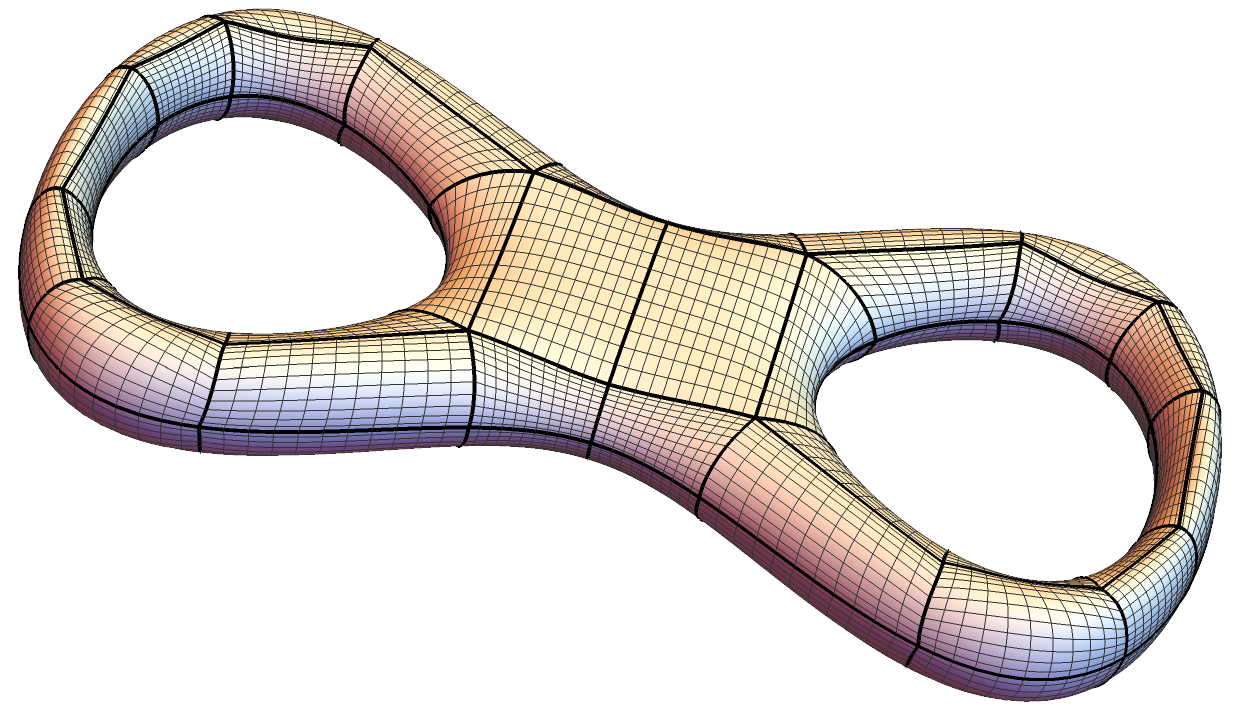}
        \vskip-2.5em
		\includegraphics[scale=0.45]{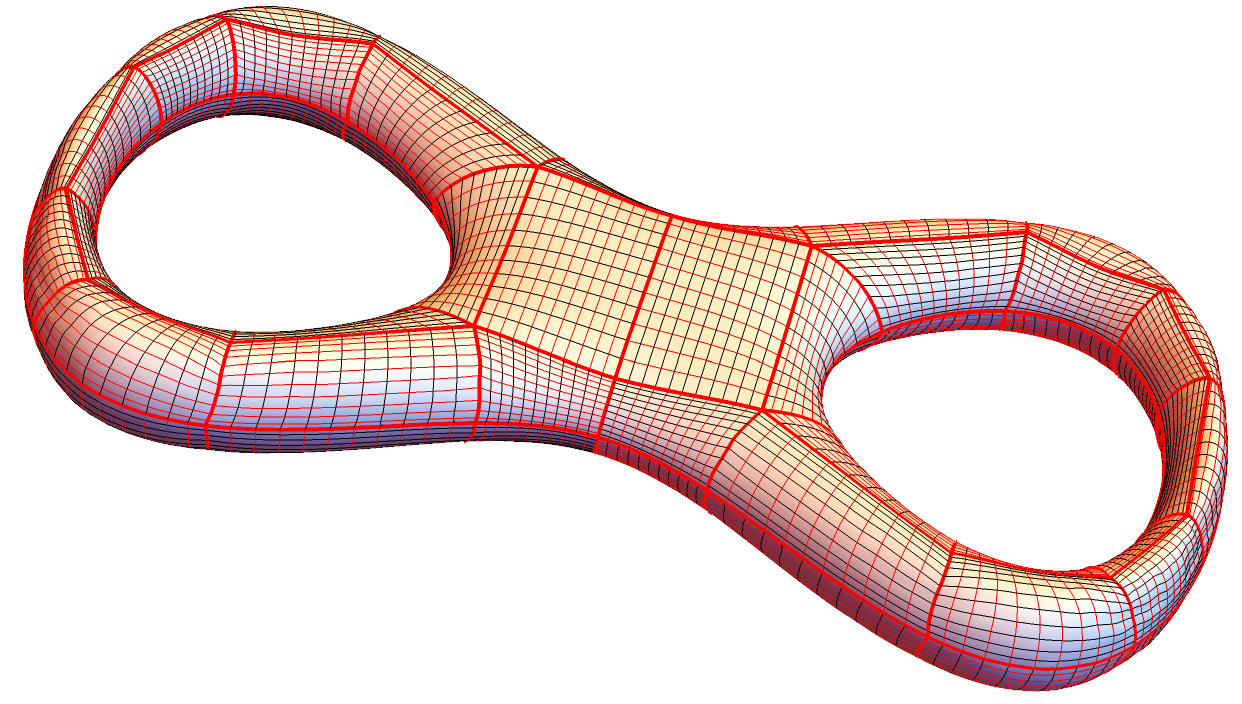}
		\caption{Example \ref{exTorus}. The given non-AS-$G^1$ multi-patch surface~$\f{S}$ (above) and its AS-$G^1$ approximant~$\f{F}$ (below). }
  \label{torusCompare}
\end{figure}
\end{ex}

\begin{ex}\label{exGspline}

In the last example, we consider an open multi-patch surface, namely the G-spline surface {\new $\f{S}$} presented in Fig.~\ref{GSplineCompare}, left, which consists of $36$ biquadratic patches and which possesses two extraordinary vertices. G-spline surfaces~\cite{Re95} are a simple and powerful tool to model complex multi-patch spline surfaces with extraordinary vertices. However, they are not AS-$G^1$ and have therefore a reduced approximation power as currently shown in~\cite{WeFaLiWeCa23}. 

To use our local based algorithm to approximate the G-spline surface {\new $\f{S}$} by an AS-$G^1$ multi-patch spline surface {\new $\f{F}$}, we will have to do a precomputation step for the given G-spline surface {\new $\f{S}$}, which will be demonstrated on the basis of the upper left extraordinary vertex, cf. Fig.~\ref{GSplineCompareSmall}. As already mentioned in Section~\ref{subsec:precomputation_data}, the $C^1$-$G^1$ transition {\new of the G-spline surface~$\f{S}$} between the first and second ring of patches around an extraordinary vertex (i.e. the blue interfaces in Fig.~\ref{GSplineCompareSmall}, left) 
{\new 
would not allow directly the selection of linear inner ring patch gluing functions~$\alpha^{(i,i_j)}$, since they would have to be at least of degree~$2$ in this case.} 
{\new Namely, by} enforcing these inner ring patch gluing functions~$\alpha^{(i,i_j)}$ to be linear as {\new required for $\f{F}$} in our proposed local method would imply singularities at the red vertices in Fig.~\ref{GSplineCompareSmall}, left. One way to prevent now the appearance of these singularities and to approximate a $G$-spline surface {\new $\f{S}$} by an AS-$G^1$ multi-patch spline surface {\new $\f{F}$} in a good quality, {\new and hence to get linear gluing functions $\alpha^{(i,i_j)}$ for $\f{F}$,} is to first irregularly subdivide some particular patches in the second ring around an extraordinary vertex with five new patches of degree~$4$ as shown in Fig.~\ref{GSplineCompareSmall}, middle. {\new Note that a regular subdivision would not work here, since the problem for the corresponding gluing functions would be just transmitted from the original surface $\f{S}$ to the subdivided surface.
}

In case of the given G-spline surface with $36$ patches (Fig.~\ref{GSplineCompare}, left), we subdivide now $9$ patches which results in a G-spline surface~$\f{S}$ consisting of $72$ patches (Fig.~\ref{GSplineCompare}, middle). The subdivided non-AS-$G^1$ surface~$\f{S}$ is approximated by an AS-$G^1$ multi-patch spline surface~$\f{F}$ by using splines from the space $(\mathcal{S}^{\f{4},\f{1}}_{\f{2}})^3$ to represent the single surface patch parameterizations~$\f{F}^{(i)}$, $i \in \mathcal{I}_{\Omega}$, and is shown in  Fig.~\ref{GSplineCompare}, right. The relative $L^2$- and $H^1$- error between the surfaces~$\f{S}$ and $\f{F}$ is equal to $\epsilon_{L^2} = 2.3 \cdot 10^{-4}$ and $\epsilon_{H^1} = 8.9 \cdot 10^{-4}$, respectively.

\begin{figure}[ht]
	\centering
        \includegraphics[scale=0.35]{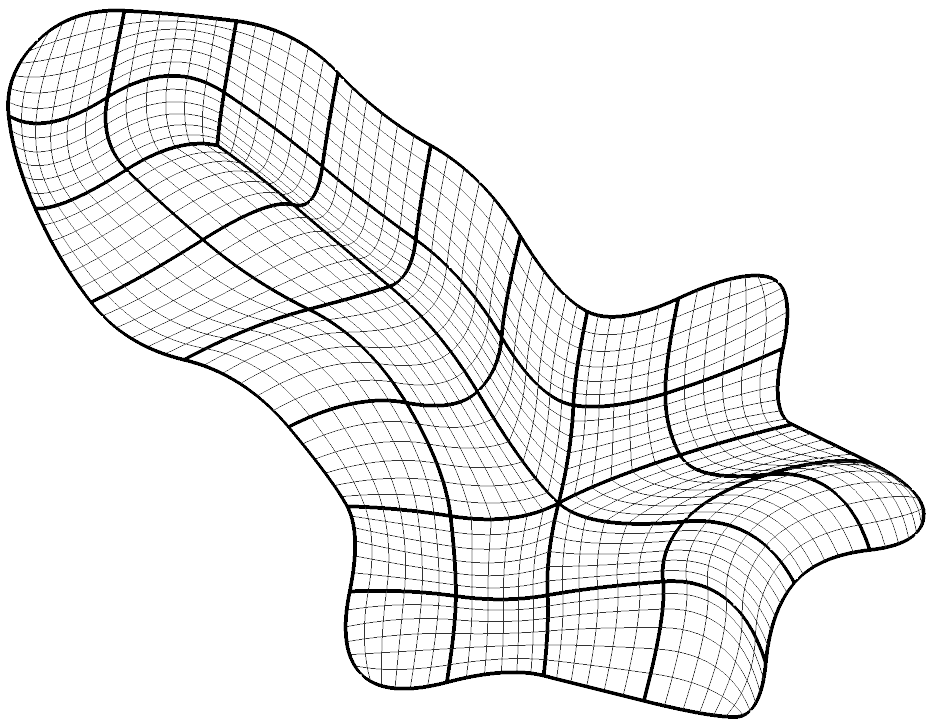}
        \hspace{-0.75cm}
        \includegraphics[scale=0.35]{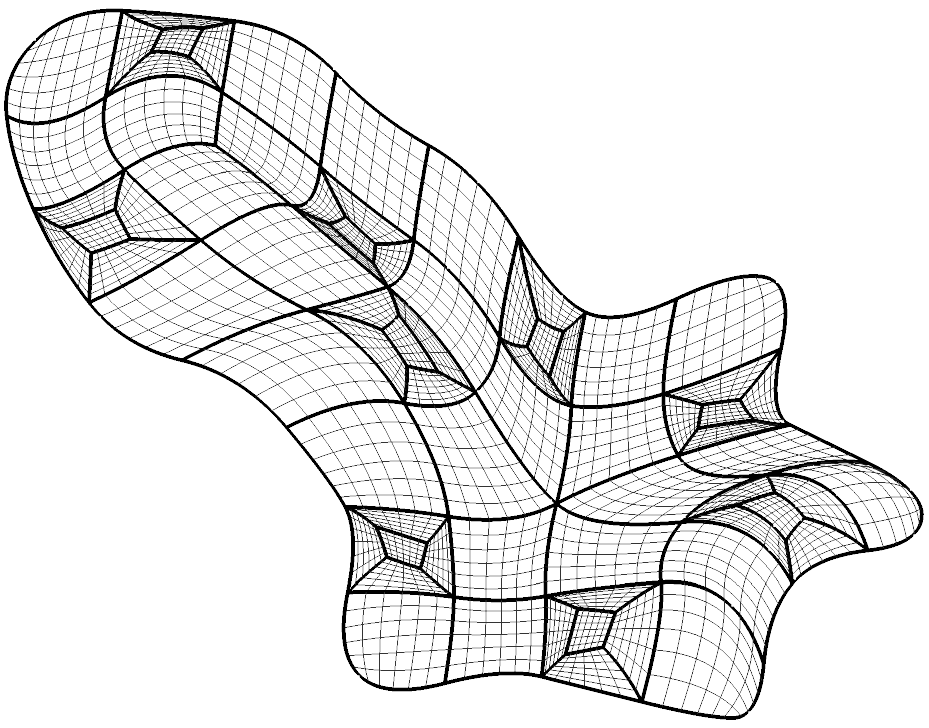}
        \hspace{-0.85cm}
		\includegraphics[scale=0.29]{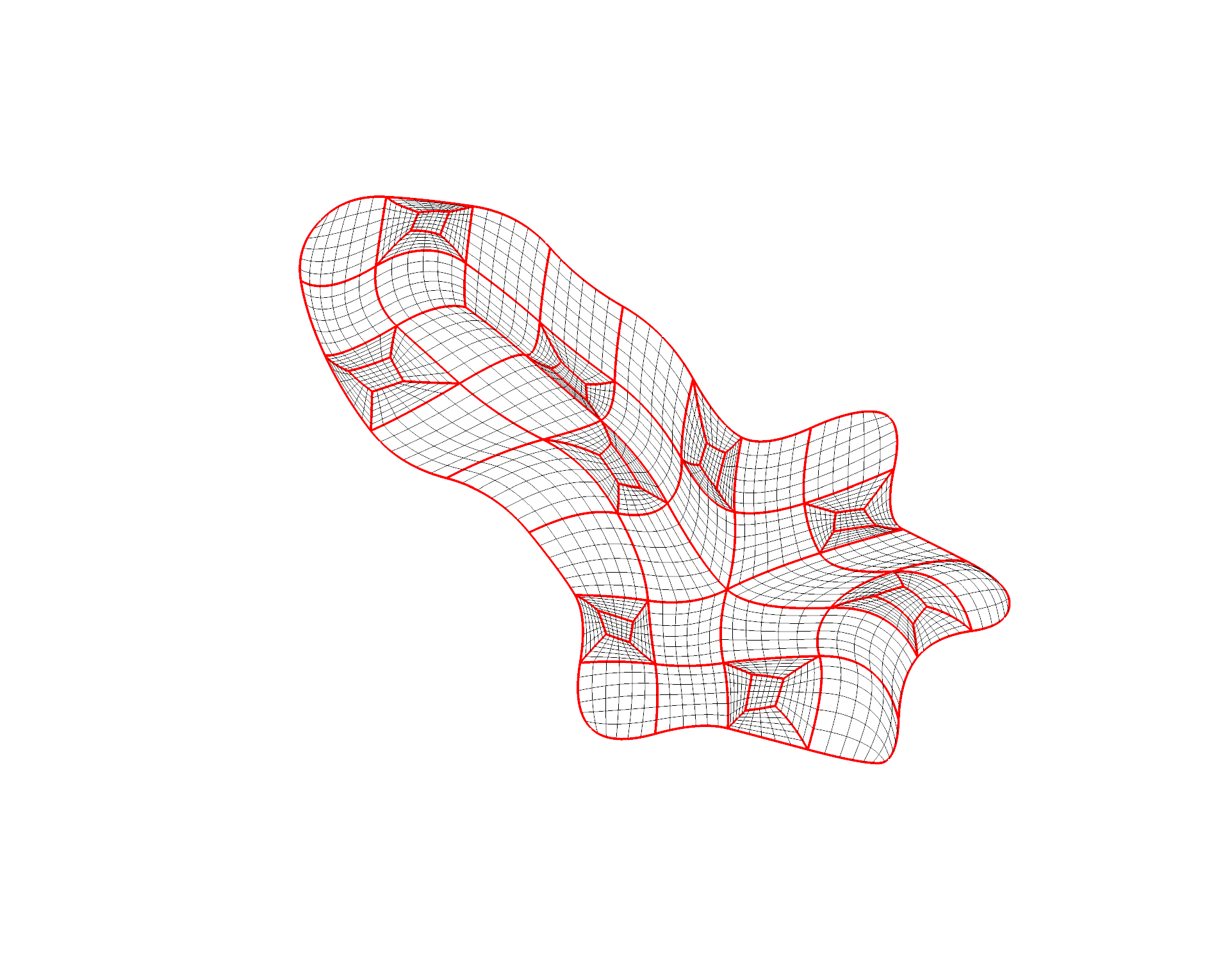}
		\caption{Example \ref{exGspline}. The originally given and the subdivided multi-patch surface~$\f{S}$ (left and middle) with the AS-$G^1$ approximant~$\f{F}$ (right). }
  \label{GSplineCompare}
\end{figure}

\begin{figure}[ht]
	\centering
         \includegraphics[scale=0.27]{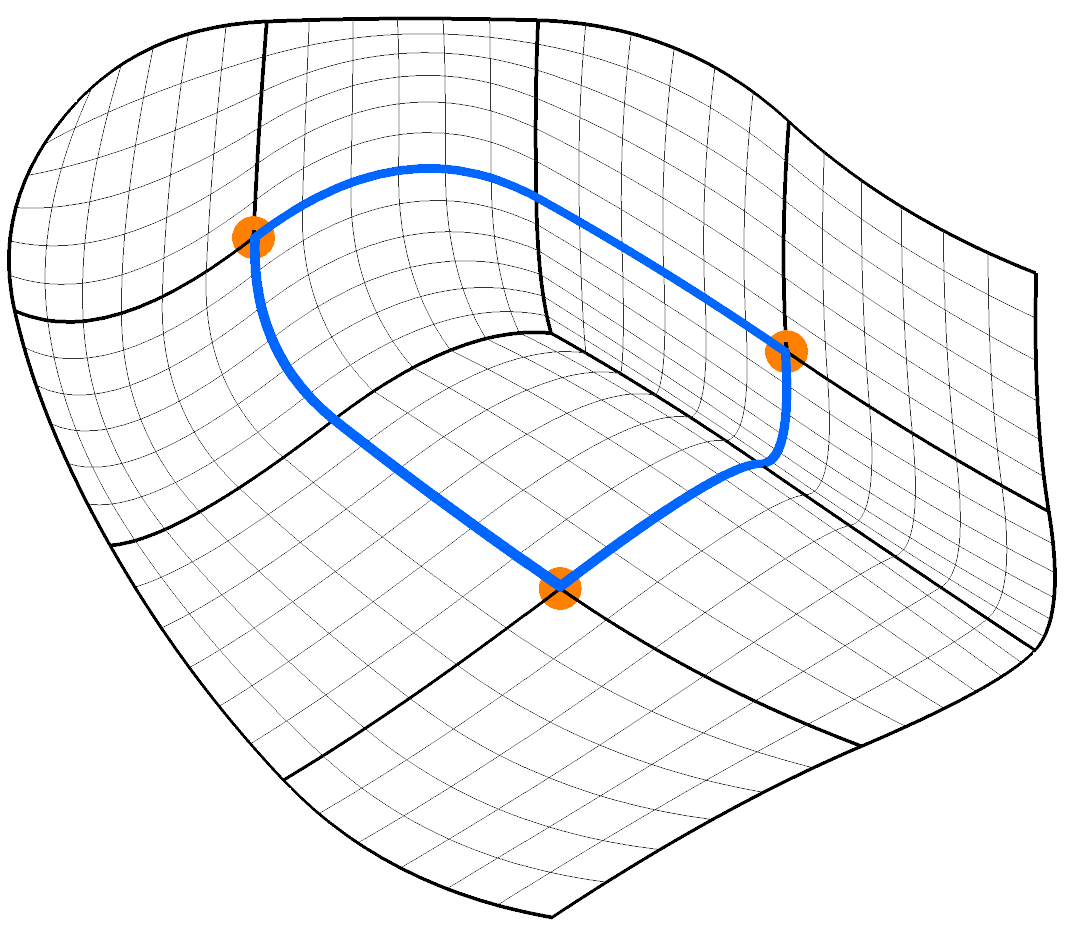}
         \hspace{0.3cm}
        \includegraphics[scale=0.34]{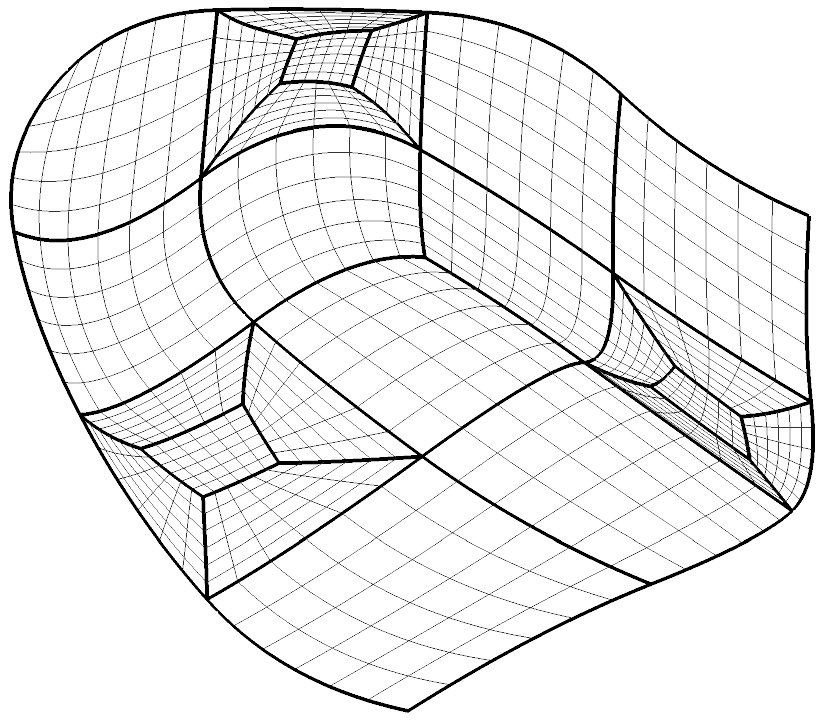}
         \hspace{0.3cm}
		\includegraphics[scale=0.38]{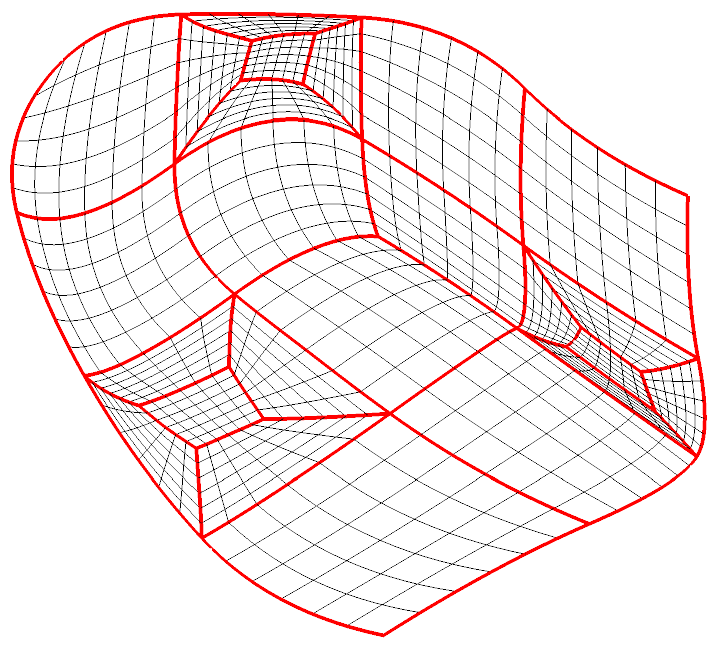}
		\caption{Example \ref{exGspline}. The upper left piece of the originally given and of the subdivided multi-patch surface~$\f{S}$ (left and middle) and of the AS-$G^1$ approximant (right) from Fig.~\ref{GSplineCompare}. In the left figure the blue interfaces separate the first and the second ring of patches around the extraordinary vertex. Red vertices represent the vertices where irregularities would appear if no subdivision would be applied on the three particular patches in the second ring.}
  \label{GSplineCompareSmall}
\end{figure}
\end{ex}

\subsection{Solving the biharmonic problem} \label{subsec:biharmonic_equation}

We will show that the use of an AS-$G^1$ multi-patch surface together with the $C^1$-smooth isogeometric spline space~\cite{FaJuKaTa22} allows the solving of fourth order PDEs, such as the biharmonic problem in this work, with optimal rates of convergence. This will be demonstrated below on the basis of Example~\ref{nintendoBIH} and \ref{goursatBIH} by employing the AS-$G^1$ multi-patch surfaces constructed in Example~\ref{exNintendo} and \ref{exGoursat}, respectively. 

In case of Example~\ref{nintendoBIH}, where the resulting planar multi-patch domain~$\Omega$ from Example~\ref{exNintendo} will be considered, we will study the following problem. Find $u: \Omega \rightarrow \mathbb{R}$, such that
\begin{equation}\label{biharmonicOpen}
\left\{
\begin{matrix}
\Delta^2_\Omega u(\bfm{x}) = f(\bfm{x}),  & \hspace{-0.5em} \bfm{x} \in \Omega, \\
\quad \; \; u(\bfm{x}) = g_1(\bfm{x}), & \bfm{x} \in \partial\Omega, \\
\; \; \partial_{\bfm{n}}u(\bfm{x}) = g_2(\bfm{x}), & \bfm{x} \in \partial\Omega,
\end{matrix}
\right.
\end{equation}
for some given functions $f$, $g_1$ and $g_2$. In Example~\ref{goursatBIH}, where we will deal with the closed multi-patch surface domain~$\Omega$ from Example~\ref{exGoursat}, which does not possess a boundary, we will consider instead a slightly different fourth order problem. Find $u: \Omega \rightarrow \mathbb{R}$ such that
\begin{equation}\label{biharmonicClosed}
\Delta_{\Omega}^2u(\bfm{x}) + \lambda u(\bfm{x}) = f(\bfm{x}), \quad \bfm{x}\in\Omega,
\end{equation}
for some given function~$f$, with $\lambda > 0$, where the additional term $\lambda u$ is needed to ensure well-posedness of the problem. Both problems~\eqref{biharmonicOpen} and \eqref{biharmonicClosed} will be solved via their weak form and a standard Galerkin discretization, see for details e.g.~\cite{FaJuKaTa22}, by finding the solution
\[
u_h(\bfm{x}) = \sum_{j\in\mathcal{J}}c_j \phi_{j,h}(\bfm{x}),\quad \bfm{x} \in \Omega,
\]
where $\{\phi_{j,h}\}_{j\in \mathcal{J}}$ is the basis of $C^1$-smooth isogeometric spline space $\mathcal{A}_h$, introduced in~\cite{FaJuKaTa22} and presented in Section~\ref{subsec:C1_spaces}, with the mesh size~{\new $h = \frac{1}{k+1}$} and $\mathcal{J} = \{0,1,\dots,\dim\mathcal{A}_h-1 \}$. {\new Note that the spline space~$\mathcal{A}$ is h-refinable as a direct consequence of its definition~\eqref{eq:definition} due to the h-refinability of the spline spaces $\mathcal{S}_{k}^{p,r+1}$ and $\mathcal{S}_{k}^{p-1,r}$ for the traces and specific transversal derivatives of the functions along/across the interface curves, see also~\cite{BrGiKaVa23}.} 

To {\new numerically} study the approximation properties of the $C^1$-smooth isogeometric spline space {\new $\mathcal{A}_h$} over the constructed AS-$G^1$ multi-patch surfaces, we will perform $h$-refinement by generating a sequence of $C^1$-smooth isogeometric spline spaces $\mathcal{A}_h$, with mesh sizes $h = 2^{-L}h_0$, where $L=0,1,2,\dots$ is the refinement level and $h_0$ is the initial mesh size. In Example~\ref{nintendoBIH} we will do the error analysis with respect to an exact solution~$u_{ex}$ by computing the errors with respect to the $L^2$-norm, to the $H^1$-seminorm and to the $L^2$-norm of the difference of the Laplace operator
\begin{equation*}
    {||\Delta u_{ex}-\Delta u_h||_{L^2}},
\end{equation*}
which is equivalent to the $H^2$-seminorm cf.~\cite{BaDe15}. When one global parameterization of the exact solution is not available as in Example~\ref{goursatBIH}, we will compare instead the difference between two subsequent solutions and will compute for this the corresponding analogue $h - \frac{h}{2}$ type error estimators given by
\begin{equation*}
        ||u_h - u_{\frac{h}{2}}||_{L^2}, \mbox{ }
        |u_h - u_{\frac{h}{2}}|_{H^1} \mbox{ and } 
        ||\Delta u_h - \Delta u_{\frac{h}{2}}||_{L^2}.
\end{equation*}
For the sake of brevity we will denote the different errors for Example~\ref{nintendoBIH} and \ref{goursatBIH} simply as errors with respect to the $L^2$-norm, and $H^1$- and $H^2$-seminorms.

\begin{ex}\label{nintendoBIH}
We solve the biharmonic problem~\eqref{biharmonicOpen} over the planar AS-$G^1$ multi-patch geometry constructed in Example~\ref{exNintendo} {\new visualized in Fig.~\ref{nintendoCompare} (middle)}. For this purpose, we choose the exact solution 
\begin{equation} \label{exactSolution}
u_{ex}(x_1,x_2) = \cos(4x_1)\sin(4x_2),
\end{equation}
compute the functions $f$, $g_1$ and $g_2$ from it, {\new strongly impose via $L^2$ projection the boundary data~$g_1$ and $g_2$ to the numerical solution~$u_h$, which will be represented by means of }
the $C^1$-smooth isogeometric spline space $\mathcal{A}_h$ {\new with} 
degree $p=4$, 
regularity $r=1$ and 
{\new an} initial mesh size $h_0=\frac{1}{6}$. The numerical solution at the highest refinement level as well as the convergence rates for the $L^2$-norm and $H^1$- and $H^2$-seminorms, where all rates are of optimal order, are presented in Fig.~\ref{nintendoBiharmonic}.
\begin{figure}[ht]
	\centering
        \includegraphics[scale=0.35]{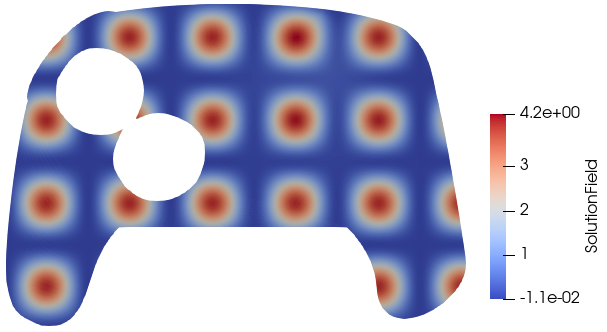}
        \hskip0.5em
		\includegraphics[scale=0.21]{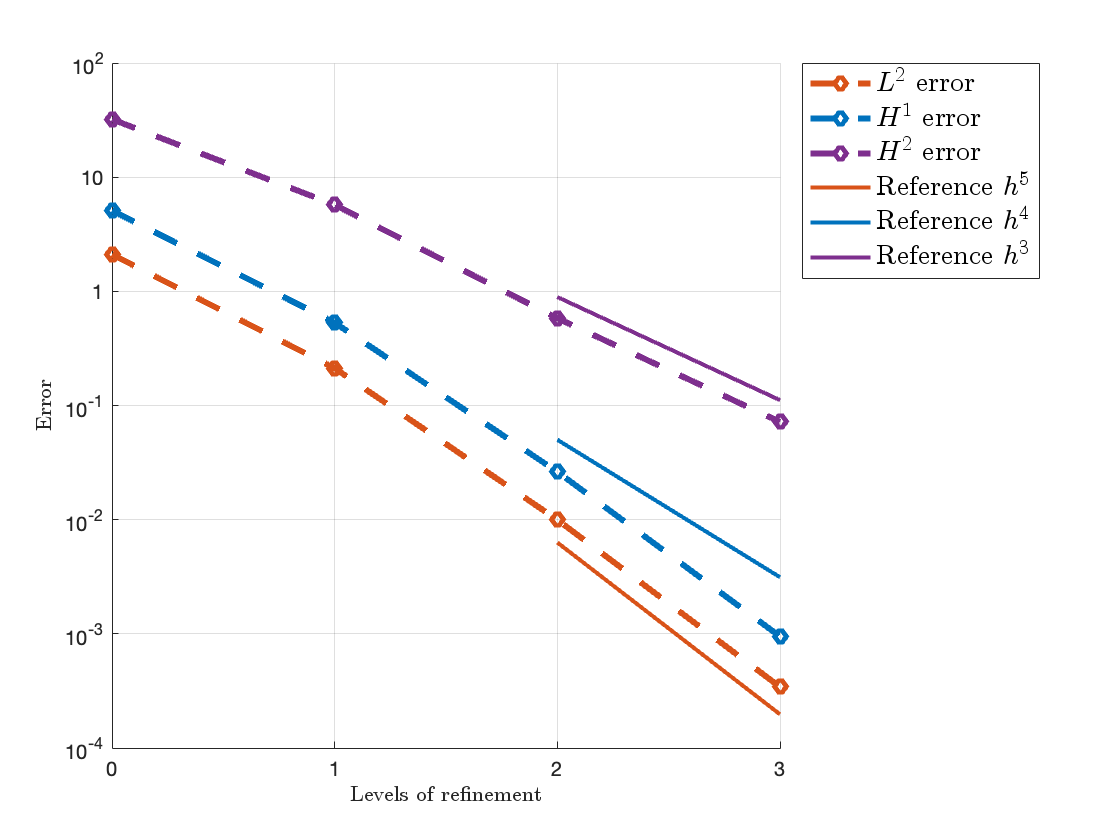}
      \caption{Example~\ref{nintendoBIH}. Solving the biharmonic problem~\eqref{biharmonicOpen} over the constructed AS-$G^1$ multi-patch geometry from Example~\ref{exNintendo} {\new given in Fig.~\ref{nintendoCompare} (middle)} for the exact solution~\eqref{exactSolution} by using the spline space~$\mathcal{A}_h$ for $p=4$ and $r=1$. Numerical solution at the highest refinement level (left) and the resulting convergence plots (right). }
  \label{nintendoBiharmonic}
\end{figure}
\end{ex}

\begin{ex}\label{goursatBIH}
We solve the biharmonic problem~\eqref{biharmonicClosed} over the constructed AS-$G^1$ multi-patch approximation of the Goursat surface from Example \ref{exGoursat} {\new shown in Fig.~\ref{goursatCompare} (middle)}. We choose
\begin{equation} \label{eq:function_f}
f(x_1,x_2,x_3) = \cos\left(\frac{x_1}{2}\right)\cos\left(\frac{x_2}{2}\right)\cos\left(\frac{x_3}{2}\right),
\end{equation}
and select as degree $p=4$, as regularity $r=1$ and as initial mesh size $h_0 = \frac{1}{2}$ for the $C^1$-smooth isogeometric spline space $\mathcal{A}_h$. Fig.~\ref{goursatBiharmonic} presents the numerical solution at the highest refinement level as well as the convergence rates for the $L^2$-norm, and $H^1$- and $H^2$-seminorms. Again all convergence rates are of optimal order. 
\begin{figure}[ht]
	\centering
        \includegraphics[scale=0.25]{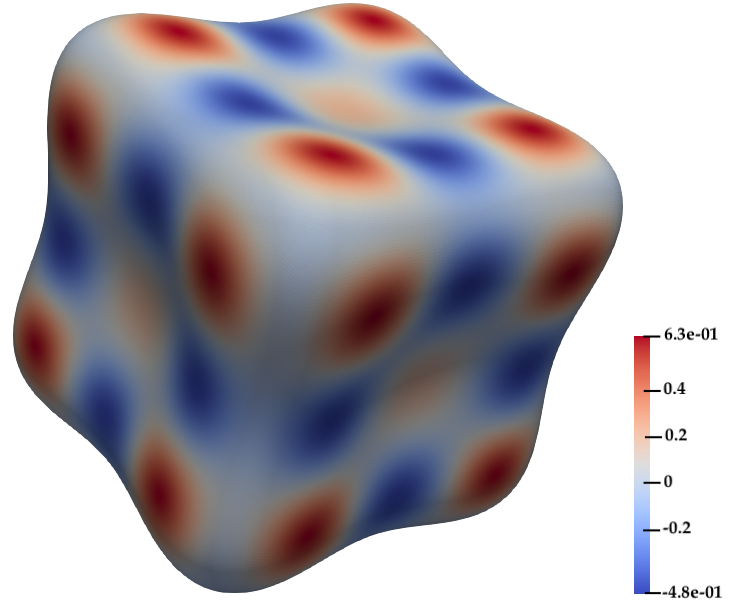}
        \hskip2em
		\includegraphics[scale=0.21]{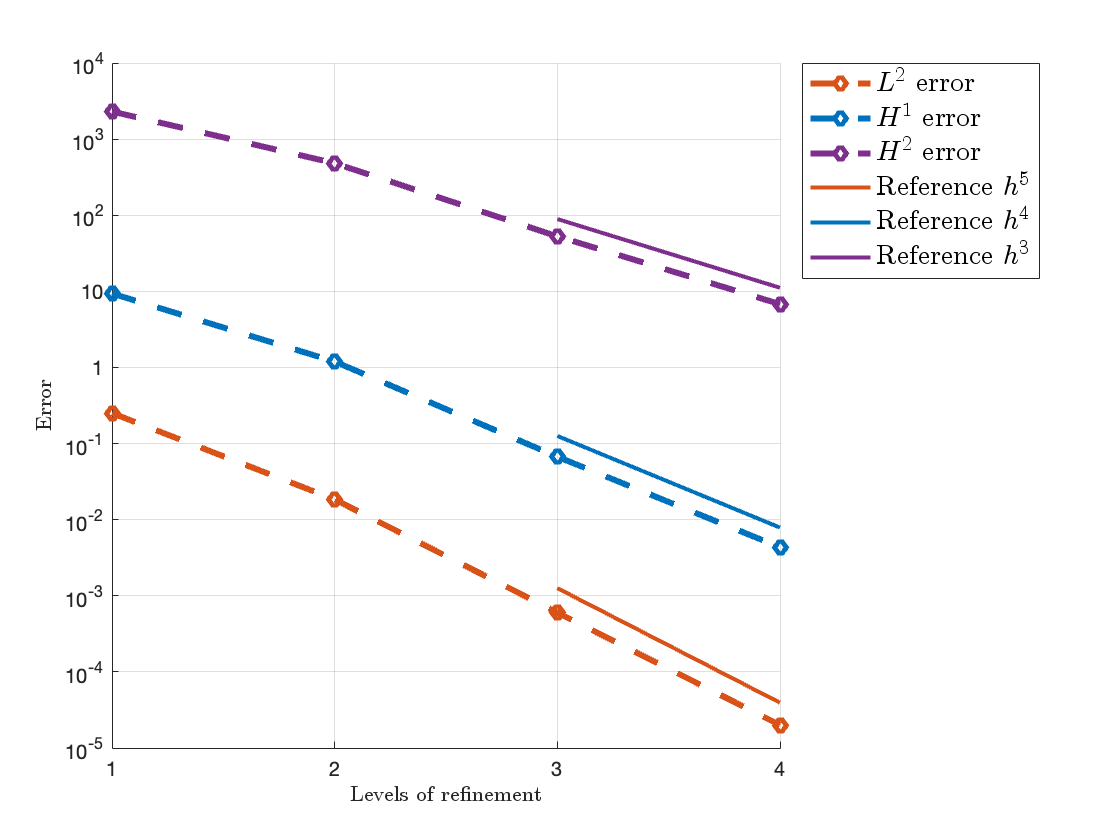}
       \caption{Example~\ref{goursatBIH}. Solving the biharmonic problem~\eqref{biharmonicClosed} over the AS-$G^1$ approximation of the Goursat surface as presented in Example~\ref{exGoursat} {\new given in Fig.~\ref{goursatCompare} (middle)} for the function~$f$ given in~\eqref{eq:function_f} by using the spline space~$\mathcal{A}_h$ for $p=4$ and $r=1$. Numerical solution at the highest refinement level (left) and the resulting convergence plots (right). }
  \label{goursatBiharmonic}
\end{figure}
\end{ex}
\section{Conclusion} \label{sec:conclusion}

We presented a novel method for the approximation of a given $G^1$-smooth but non-AS-$G^1$ multi-patch surface by an AS-$G^1$ multi-patch spline surface. The proposed technique is fully local by numerically solving in three consecutive steps, namely first for all inner vertices and boundary vertices of patch valency greater than one, then for all interface {\new curves} and finally for all patches, a series of small quadratic optimization problems. This makes the approach also applicable to $G^1$-smooth multi-patch surfaces with a high number of surface patches as shown on the basis of several examples. The potential of our approach was further demonstrated by solving with optimal rates of convergence the biharmonic problem over some of the constructed AS-$G^1$ multi-patch spline surfaces by means of the $C^1$-smooth isogeometric spline space~\cite{FaJuKaTa22}. One possible topic for future research could be the use of our technique to generate complex AS-$G^1$ multi-patch spline surfaces to represent the computational domain of other fourth order partial differential equations such as the Kirchhoff-Love shell problem, the Cahn-Hilliard equation or problems of strain gradient elasticity, and to solve the corresponding problems over these surfaces. 

\section*{Acknowledgements} 
{\new The authors wish to thank the anonymous reviewers for their comments that helped to improve the paper.} A. Farahat and M. Kapl have been partially supported by the Austrian Science Fund (FWF) through the project P~33023-N.
V.~Vitrih has been partially supported by the Slovenian Research {\new and Innovation} Agency (research program P1-0404 and research projects N1-0296, J1-1715, N1-0210 and J1-4414). A.~Kosma\v c has been partially supported by the Slovenian Research {\new and Innovation} Agency (research program P1-0404, research project N1-0296 and Young Researchers Grant).
This support is gratefully acknowledged.


\end{document}